\theoremstyle{plain}
 \newtheorem{theorem}{Theorem}
 \newtheorem{lemma}{Lemma}
\theoremstyle{definition}
\DeclareMathOperator{\disc}{disc}
\DeclareMathOperator{\Gal}{Gal}
\DeclareMathOperator{\Gr}{Gr}
\DeclareMathOperator{\lcm}{lcm}
\DeclareMathOperator{\Pic}{Pic}
\DeclareMathOperator{\SL}{SL}
\newcommand{\IZ}{\mathbb{Z}}
\newcommand{\IZhat}{\widehat{\IZ}}
\newcommand{\IQ}{\mathbb{Q}}
\newcommand{\IQbar}{\overline{\mathbb{Q}}}
\newcommand{\IC}{\mathbb{C}}
\newcommand{\IF}{\mathbb{F}}
\newcommand{\IP}{\mathbb{P}}
\newcommand{\IA}{\mathbb{A}}
\newcommand{\house}[1]{\lceil #1 \rceil}
\title{Linear Equations in Singular Moduli}
\author{Yuri Bilu}
\email{yuri@math.u-bordeaux.fr}
\address{IMB, Universit\'{e} de Bordeaux \\  
351, cours de la Libération \\
33 405 Talence cedex \\
France}
\author{Lars K\"uhne}
\email{lars.kuehne@unibas.ch}
\address{Departement Mathematik und Informatik \\
Spiegelgasse 1 \\
4051 Basel \\
Switzerland}
\subjclass[2010]{11G18 (primary), 11G50, 14G35, 11R37} 
\begin{document}

\begin{abstract}
We establish an effective version of the Andr\'e-Oort conjecture for linear subspaces of $Y(1)^n_\IC \approx \IA_\IC^n$. 
Apart from the trivial examples provided by weakly special subvarieties, this yields the first algebraic subvarieties in a Shimura variety of dimension $> \! 1$ whose CM-points can be (theoretically) determined.
\end{abstract}

\maketitle

\section{Introduction}

In a previous article \cite{Kuehne2017}, one of us obtained restrictions on the intersection of ring class fields associated with distinct imaginary quadratic fields. In addition, he was able to reprove a weak corollary of the Andr\'e-Oort conjecture effectively (\cite[Theorem 3]{Kuehne2017}). Unfortunately, his results fall short of establishing complete results of Andr\'e-Oort type effectively. In this article, we complement the technique of \cite{Kuehne2017} and prove effective Andr\'e-Oort type results for a class of subvarieties in products of modular curves. The novelty is that this class contains varieties of arbitrary high dimension. 

Up to now, all non-trivial effectively solvable cases (\cite{Allombert2015, Bilu2016, Bilu2013, Kuehne2012, Kuehne2013}) of the Andr\'e-Oort conjecture have been restricted to the case of curves. In particular, the only known examples of algebraic subvarieties in Shimura varieties that are known to contain no special points are either curves or weakly special subvarieties. That the latter do not contain special points unless they are actually special subvarieties is a direct consequence of Moonen's characterization of special subvarieties (\cite[Theorem 4.3]{Moonen1998}).

It is a well-known fact that the $j$-invariant yields an isomorphism $Y(1) = \IA^1_\IQ$ and that this identification establishes $\IA^1_\IQ$ as a canonical $\IQ$-model of $Y(1)$. There is hence a well-defined notion of \textit{(affine) linear subvarieties} in $Y(1)^n$ ($n \geq 2$). Additionally, the positive-dimensional maximal special subvarieties (i.e., the maximal subvarieties of Hodge type) of a proper linear subspace are linear subvarieties themselves. Furthermore, any special linear subvariety of dimension $n-1$ is of the form $V(z_i = z_j) \subset Y(1)^n$ where $(i,j)$ is a pair of distinct integers from $\{ 1, \dots, n\}$. A general special linear subvariety is just an intersection of finitely many of these special linear hypersurfaces. Our Section \ref{subsection::specialsubvarieties} provides a proof of these facts.

An immediate consequence of this description of the positive-dimensional maximal special subvarieties in $L$ is that there are at most finitely many of them and that they can be easily determined effectively. Consequently, the main difficulty in proving the Andr\'e-Oort conjecture for linear subvarieties is with those special points not contained in any positive-dimensional special subvarieties. 

To state our main result, we introduce some notions related to special points (for details see Section \ref{subsection::specialpoints}). The components of a special point in $Y(1)^n$ are CM-points in $Y(1)$. In the moduli interpretation, such a CM-point corresponds to an isomorphism class of elliptic curves whose endomorphism ring is an imaginary quadratic order. These imaginary quadratic orders can be uniquely described in terms of their discriminants. In this way, we associate with each special point $P \in Y(1)^n(\IQbar)$ a $n$-tuple $\Delta(P)=(\Delta_1(P),\cdots,\Delta_n(P))$ of such discriminants. Since there are only finitely many CM-elliptic curves whose endomorphism ring has a bounded discriminant, bounding discriminants for a set of CM-points amounts to proving its finiteness. With these preparations, we can formulate our main result as follows. The height $H(L)$ of a linear subvariety $L \subseteq Y(1)^n_{\IQbar} = \IQbar^n$ is defined in Section \ref{subsection::heights}.

\begin{theorem} \label{theorem::main} Let $N$ be a normal number field and $L \subseteq Y(1)^n_{N}$ a linear subvariety of height $H(L)$. Denote by $Z^{\mathrm{sp}}$ the union of the positive-dimensional special subvarieties contained in $L$. With constants
\begin{equation*}
c_1 = 480 n^264^n[N:\IQ]^3,
\end{equation*}
and
\begin{equation*}
c_2 = (1.4 \cdot 10^{11}) (2.1 \cdot 10^4)^{n} (n+1)^{4n+6} [N:\IQ]^4,
\end{equation*}
we have
\begin{equation*}
\max \{ |\Delta_1(P)|, \dots, |\Delta_n(P)| \}^{1/2} \leq c_1 \cdot \log(H(L)) + c_2
\end{equation*}
for any special point $P \in (L \setminus Z^{\mathrm{sp}})(\IQbar)$.
\end{theorem}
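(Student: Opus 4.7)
The plan is to argue by contradiction. Suppose $P = (j_1, \ldots, j_n) \in (L \setminus Z^{\mathrm{sp}})(\IQbar)$ is a special point violating the bound, and after reindexing assume $|\Delta_1| = \max_i |\Delta_i|$. I would combine three ingredients: archimedean estimates for singular moduli, the Galois action on CM points via class field theory of imaginary quadratic fields, and the effective restrictions on intersections of ring class fields established in \cite{Kuehne2017}.

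\textbf{Archimedean isolation.} For a singular modulus $j(\tau)$ with $\tau$ in the standard fundamental domain, one has $|j(\tau)| = e^{2\pi \Im\tau} + O(1)$, and the reduced representative of a class of discriminant $\Delta$ satisfies $\Im\tau \approx \sqrt{|\Delta|}/2$. In the Galois orbit of $P$ over $N$, there is a conjugate $P^\sigma$ whose $\Delta_1$-component is represented by this ``cusp'' representative, so that $|j_1^\sigma|\geq\exp(\pi\sqrt{|\Delta_1|})-C$. Substituting into the defining linear equations of $L$, some other coordinate $j_k^\sigma$ must cancel this dominant term, which forces $|\Delta_k|$ to be comparable to $|\Delta_1|$. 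The coefficients of the linear equations enter through $H(L)$, producing the $c_1\log H(L)$ contribution.

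\textbf{Galois orbit and class field theory.} The group $\Gal(\IQbar/N)$ acts on $P$ with orbit in $L(\IQbar)$, and the component-wise action realizes the full Galois orbits of each $j_i$ over $\IQ(\sqrt{\Delta_i})$. Varying the class group elements acting on the $\Delta_1$-- and $\Delta_k$--components independently produces many conjugates that must simultaneously satisfy the defining linear equation. An archimedean pigeonhole argument---alternately placing the $\Delta_1$-- or the $\Delta_k$--component at the cusp---transfers dominance between coordinates and yields strong rigidity.

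\textbf{Ring class field restrictions and termination.} These constraints can only hold consistently if the ring class fields $H(\Delta_1)$ and $H(\Delta_k)$ share a large common subfield, at which point \cite{Kuehne2017} provides an effective upper bound for $\min(|\Delta_1|,|\Delta_k|)$ unless $\IQ(\sqrt{\Delta_1})=\IQ(\sqrt{\Delta_k})$ and a specific divisibility structure holds which in fact forces $j_1=j_k$, i.e.\ $P\in V(z_1=z_k)$. Iterating over a maximal subset of indices $S\subseteq\{1,\ldots,n\}$ that get identified in this way yields either a bound on $|\Delta_1|$ in the desired range, or $P\in\bigcap_{i,k\in S}V(z_i=z_k)\cap L$. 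By Section~\ref{subsection::specialsubvarieties}, the latter intersection is itself a special subvariety of $L$; if positive-dimensional it is contained in $Z^{\mathrm{sp}}$, contradicting the hypothesis, and if $0$-dimensional it is a special point forcing all $\Delta_i$ equal to $\Delta_1$, which reduces to a single-discriminant estimate directly bounding $|\Delta_1|$.

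\textbf{Main obstacle.} The principal difficulty lies in the iteration of the third step: the bound from \cite{Kuehne2017} must be applied not merely to a single pair $(\Delta_1,\Delta_k)$ but recursively across coordinates, extracting the subset of indices essentially identified by the linear structure of $L$ while maintaining effective constants through each application. This recursive bookkeeping produces the characteristic $(n+1)^{4n+6}$ factor in $c_2$; the $[N:\IQ]$--powers track the Galois orbit size lower bound and the field of definition of $L$, and the $c_1\log H(L)$ term carries the archimedean cancellation inequality from the isolation step.
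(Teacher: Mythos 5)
Your sketch captures the right raw ingredients (the cusp estimate $|j(\tau)|\approx e^{2\pi\operatorname{Im}\tau}$, the Galois action on CM points through ring class fields, and the intersection result of \cite{Kuehne2017}), but as it stands it has genuine gaps at exactly the points where the real work lies. The most serious one is your third step: when $\IQ(\sqrt{\Delta_1})=\IQ(\sqrt{\Delta_k})$ you claim a ``divisibility structure'' forces $j_1=j_k$. This is false: distinct singular moduli with the same imaginary quadratic field, and even with the same discriminant, are plentiful (any class number $>1$), and this same-field case is precisely the core difficulty. The paper spends two full steps on it: when all discriminants are equal, a pure archimedean comparison at the cusp suffices; when only the fields agree but the conductors differ, one must compare $f$ and $f_{l+1}$, use the degree $[K[\operatorname{lcm}(f,f_{l+1})]:K[f_{l+1}]]\gg(\operatorname{lcm}(f,f_{l+1})/f_{l+1})^{1/2}$ from the class number formula to produce a Galois automorphism moving the leading block, subtract the conjugated equation to get a \emph{new} linear equation with one fewer discriminant, and iterate -- with the bookkeeping $k\mapsto 2k$, $H_0\mapsto 2H_0^2$ at each of at most $k-1$ iterations producing the $8^k$ and $64^k$ factors. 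Your ``pigeonhole with independently varying class group elements'' presupposes the very independence (smallness of intersections of the relevant ring class fields and of the field of coefficients) that this machinery is needed to establish effectively, and your claimed termination via $P\in\bigcap_{i,k\in S}V(z_i=z_k)\cap L$ does not work: that intersection is linear but in general not special, so its positive-dimensionality does not place $P$ in $Z^{\mathrm{sp}}$. The paper's deduction of the theorem instead checks whether the special variety $Z=\bigcap_i V(z_{n_{i-1}+1}=\cdots=z_{n_i})$ is itself contained in $L$; if not, it projects to $Y(1)^r$, bounds $H(L\cap Z)<3^nH(L)$ via \cite[Theorem 2.8.13]{Bombieri2006}, extracts a single nontrivial linear equation in \emph{distinct} singular moduli with coefficient height $<3^nH(L)$, and applies the key Lemma \ref{lemma::samefield}.

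The second gap is effectivity of the class-field-theoretic step. Citing \cite{Kuehne2017} does not by itself bound $\min(|\Delta_1|,|\Delta_k|)$: its corollary (Lemma \ref{lemma::ringclassfieldintersections}) only says that a certain Galois group of an intersection with a compositum is annihilated by $2^{r+1}$. Converting this into a discriminant bound requires two further inputs that your sketch omits: the genus-theory bound $\dim_{\IF_2}\Pic(\mathcal{O})[2]\ll n^2|\disc(\mathcal{O})|^{1/n}$, and an \emph{effective} lower bound on class numbers. The latter is the crux of effectivity: Siegel's bound is ineffective, so the paper uses Siegel--Tatuzawa, which is effective only away from at most one exceptional field $K_\ast$, and the exceptional field must then be handled by a separate argument (absorbing all non-exceptional singular moduli into the constant term $b'$, bounding $\house{b'}$ via the already-proved cases, and re-running the same-field argument for $K_\ast$). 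Without these elements your argument either loses effectivity entirely or breaks down for the exceptional field, and the constants $c_1$, $c_2$ -- in particular the $(n+1)^{4n+6}$ factor, which in the paper comes from the genus bound with exponent $6(r+1)$ and the class number comparison, not from ``recursive applications of \cite{Kuehne2017}'' -- cannot be recovered.
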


In \cite{Pila2011a}, it is already shown that there are only finitely many CM-points in $L \setminus Z^{\mathrm{sp}}$. The new information in our above theorem is the explicit bound on the discriminants that allows to compute them, at least theoretically. By using our proof as an algorithm to determine all special points outside of $Z^{\mathrm{sp}}$ rather than to obtain the above general bounds, it seems possible to find situations where $L \setminus Z^{\mathrm{sp}}$ is non-empty but contains no special points. In contrast, Pila's proof in \cite{Pila2011a} blends point counting in o-minimal structures, the Pila-Wilkie theorem \cite{Pila2006}, with lower bounds on Galois orbits, deduced from Siegel's bound on class numbers of imaginary quadratic fields \cite{Siegel1935}. Neither of these tools is effective, and each one alone constitutes a serious obstruction to effectivity in his approach.

We deduce Theorem \ref{theorem::main} in Section \ref{section::andreoort} rather straightforwardly from our Lemma \ref{lemma::samefield} on linear equations in \textit{distinct} singular moduli. Since a special subvariety of a Shimura variety contains infinitely many special points (e.g., by \cite[Lemmas 13.3 and 13.5]{Milne2005}), it is actually possible to reprove the above characterization of positive-dimensional special subvarieties by this deduction. The pivotal part of our argument is the proof of Lemma \ref{lemma::samefield}, which occupies all of Section \ref{section::linearequations}.




\textbf{Notations and conventions.} All number fields are contained in a fixed algebraic closure $\IQbar$ (i.e., a number field $N$ is a finite extension $\IQ \subseteq N \subset \IQbar$). We furthermore fix an embedding $\IQbar \hookrightarrow \IC$.

\section{Preliminaries}

\subsection{Heights and Linear Subspaces}
\label{subsection::heights}

We refer to the first two sections of the textbook \cite{Bombieri2006} for basics on heights and to \cite[Lecture 6]{Harris1995} for Grassmannians. Let $N$ be a number field. We denote its finite (resp.\ infinite) places of $N$ by $\Sigma_f(N)$ (resp.\ $\Sigma_\infty(N)$). For each place $\nu \in \Sigma_f(N) \cup \Sigma_\infty(N)$, we set $c_\nu = [N_\nu:\IQ_\nu]/[N:\IQ]$.

Given a point $p = (p_0:p_1:\cdots:p_n) \in \IP^n(N)$, we define projective Weil heights by 
\begin{equation*}
H^{(2)}(p) = H_{f}(p) \cdot H_{\infty}^{(2)}(p) \text{ and } H^{(\infty)}(p) = H_{f}(p) \cdot H_{\infty}^{(\infty)}(p)
\end{equation*}
where
\begin{equation*}
H_{f}(p) =
\prod_{\nu \in \Sigma_f(N)} \max \{|p_0|_\nu,|p_1|_\nu,\dots,|p_n|_\nu \}^{c_\nu},
\end{equation*}
\begin{equation*}
H_{\infty}^{(\infty)}(p) =
\prod_{\nu \in \Sigma_\infty(N)} \max \{|p_0|_\nu,|p_1|_\nu,\dots,|p_n|_\nu \}^{c_\nu},
\end{equation*}
and
\begin{equation*}
H_{\infty}^{(2)}(p) =
\prod_{\nu \in \Sigma_\infty(N)} \left( |p_0|_\nu^2+ |p_1|_\nu^2+ \cdots +|p_n|_\nu^2 \right)^{c_\nu/2}.
\end{equation*}
The definition of both $H^{(2)}(p)$ and $H^{(\infty)}(p)$ does not depend on the choice of a number field $N$ such that $p \in \IP^n(N)$. 

The projective height also induces an affine Weil height by setting
\begin{equation*}
H(p)=H^{(\infty)}(1:p_1:\cdots:p_n)
\end{equation*}
for any $p=(p_1,p_2,\dots,p_n) \in \IA^n(\IQbar)$. In our arguments, we frequently use the standard inequality
\begin{equation*}
H(p+q) \leq 2H(p)H(q)
\end{equation*}
for any $p,q \in \IA^n(\IQbar)$ (see \cite[Proposition 1.5.15]{Bombieri2006}), as well as Liouville's inequality
\begin{equation*}
|\alpha|_\nu \geq H(\alpha)^{-[N:\IQ]}
\end{equation*}
for any $\alpha \in N^\times \subset \IA^{1}(N)$ and any place $\nu \in \Sigma_f(N) \cup \Sigma_\infty(N)$ (see \cite[1.5.19]{Bombieri2006}).

We next define the height $H(L)$ of an $l$-dimensional linear subspace $L \subseteq \IQbar^n$. To such a linear variety is associated a point $p_L \in \Gr(n,l)(\IQbar)$. Grassmann coordinates determine an embedding $\Gr(n,l)_{\IQbar}\hookrightarrow \IP(\bigwedge^l \IQbar^n)$ and via standard bases we identify $\IP(\bigwedge^l \IQbar^n)$ with $\IP^{N}_{\IQbar}$, $N=\binom{n}{l}$. Using the projective Weil height defined above, we simply set $H(L)=H^{(2)}(p_L)$. In the terminology of \cite[Definition 2.8.5]{Bombieri2006}, we have $H(L)=\exp(h_{\mathrm{Ar}}(L))$. In Section \ref{section::andreoort}, we need a more explicit formula for this height. For this, let $\{ \underline{b}_1, \dots, \underline{b}_l \} \subset \IQbar^n$ be a basis of $L$ so that we have a matrix
\begin{equation*}
B = (\underline{b}_1, \dots, \underline{b}_l) = 
\begin{pmatrix}
b_{11} & b_{12} & \cdots & b_{1l} \\
b_{21} & b_{22} & \cdots & b_{2l} \\
\cdots & \cdots & \cdots & \cdots \\
b_{n1} & b_{n2} & \cdots & b_{nl}
\end{pmatrix}
\in \IQbar^{n \times l}
\end{equation*}
With each subset $I = \{ i_1, \dots, i_l\} \subseteq \{ 1, \dots, n \}$ of cardinality $l$, we associate an $(l \times l)$-minor
\begin{equation*}
B_I = 
\begin{pmatrix}
b_{i_11} & b_{i_12} & \cdots & b_{i_1l} \\
b_{i_21} & b_{i_22} & \cdots & b_{i_2l} \\
\cdots & \cdots & \cdots & \cdots \\
b_{i_l1} & b_{i_l2} & \cdots & b_{i_ll}
\end{pmatrix}
\end{equation*}
Unravelling definitions, we have
\begin{equation} \label{equation::heightformula}
H(L) = 
\prod_{\nu \in \Sigma_f(N)} \left( \max_I \left\lbrace \left\vert\det(B_I)\right\vert_\nu\right\rbrace \right)
\prod_{\nu \in \Sigma_\infty(N)} \left( \sum_I\left\vert\det(B_I)\right\vert_\nu^2
\right)^{1/2}
\end{equation}
where, in both the maximum and the sum, $I$ runs through the subsets of $\{ 1, \dots, n \}$ having cardinality $l$ (see \cite[Remark 2.8.7]{Bombieri2006}).

For Theorem \ref{theorem::main}, we need to extend this height to general (i.e., inhomogeneous) linear subvarieties $L \subseteq \IA^n_{\IQbar}$. With such a linear subvariety is associated its homogenization $L^{h} \subseteq \IP_{\IQbar}^n$ (i.e., its Zariski closure in $\IP_{\IQbar}^n$). Its preimage $\pi^{-1}(L^{h})$ under the standard projection $\pi: \IA^{n+1}_{\IQbar} \rightarrow \IP^n_{\IQbar}$ is a homogeneous linear subvariety. If $L$ is a linear subspace of $\IA^n_{\IQbar}$, it is easy to check that $H(L)=H(\pi^{-1}(L^{h}))$. For a general linear subvariety, we simply set $H(L)=H(\pi^{-1}(L^{h}))$, extending the previous definition.

We conclude with a simple observation: For every linear subvariety $L \subsetneq N^n$ of dimension $l$ there exists a hyperplane $L^\prime \subset N^n$ such that $L \subseteq L^\prime$ and $H(L^\prime) \leq H(L)$. Inspecting (\ref{equation::heightformula}), one can see that this means there exists a non-trivial linear equation
\begin{equation} \label{equation::linearequationheights}
a_1z_1 + \cdots + a_nz_n + b= 0, \ a_1,\dots,a_n,b \in N,
\end{equation}
with 
\begin{equation*}
H^{(\infty)}(a_1:\dots: a_n: b) \leq H^{(2)}(a_1:\dots: a_n: b) \leq H(L)
\end{equation*}
such that any point $(z_1,z_2,\dots,z_n) \in L(\IQbar)$ is a solution of (\ref{equation::linearequationheights}). Scaling $(a_1,\dots,a_n,b)$ by a non-zero constant, we may even assume that $H(a_1,\dots,a_n,b) \leq H(L)$. It is easy to see that it suffices to prove the assertion in the homogeneous case. Let $\{ \underline{b}_1, \underline{b}_2, \dots, \underline{b}_{l} \} \subset N^n$ be a basis of $L$. Denote by $\{ \underline{e}_1, \underline{e}_2, \dots, \underline{e}_{n} \} \subset N^n$ the standard basis of $N^n$ (i.e., those vectors whose one component is $1$ and whose other components are $0$). If $l< n-1$, there exist vectors $\underline{e}_{i_1}, \dots, \underline{e}_{i_{n-1-l}}$ such that the span $\left\langle \underline{b}_1, \underline{b}_2, \dots, \underline{b}_{l}, \underline{e}_{i_1}, \dots, \underline{e}_{i_{n-1-l}} \right\rangle$ is a hyperplane $L^\prime \subset N^n$. From \cite[Remark 2.8.9]{Bombieri2006}, we know that 
\begin{equation*}
H(L^\prime) \leq H(L)H(\langle \underline{e}_{i_1} \rangle) \cdots H(\langle \underline{e}_{i_{n-1-l}}\rangle) \leq H(L).
\end{equation*}

\subsection{Special points on $Y(1)^n$} \label{subsection::specialpoints}

Our basic reference on special points is \cite{Cox1989}; the reader is also referred to \cite[Sections 2.2 and 2.3]{Kuehne2017} for an brief summary in Deligne's terminology. By a CM-period we mean a point $\tau \in \mathcal{H} = \{ z \in \IC \ | \ \mathrm{Im}(z) > 0\}$ such that $[\IQ(\tau):\IQ]=2$. For each CM-period $\tau$, the endomorphism ring of the lattice $\IZ[\tau]$ is an order $\mathcal{O}(\tau)$ in the imaginary quadratic field $\IQ(\tau)$. We write $\Delta(\tau)$ for the discriminant of $\mathcal{O}(\tau)$, which we also call the discriminant of $\tau$ in the sequel. Writing $f$ for the conductor of $\mathcal{O}(\tau)$ with respect to the integer ring $\mathcal{O}_{\IQ(\tau)}$ of $\IQ(\tau)$, we have $\Delta(\tau)=f^2 \disc(\mathcal{O}_{\IQ(\tau)})$. 

 Consider the action of $\SL_2(\IZ)$ on the complex upper half plane $\mathcal{H} = \{ z \in \IC \ | \ \mathrm{Im}(z) > 0\}$ given by $\gamma \tau = \frac{a\tau+b}{c\tau+d}$ for $\gamma = \left(\begin{smallmatrix} a & b \\ c & d\end{smallmatrix}\right) \in \SL_2(\IZ)$. It is easy to check that for each CM-period, the elements of its $\SL_2(\IZ)$-orbit are exactly the CM-periods having the same discriminant. Additionally, the set
\begin{equation*}
\mathcal{F} = \{ \tau \in \mathcal{H}^+ \ | \ -\frac{1}{2}\leq  \mathrm{Re}(\tau) < \frac{1}{2} \text{ and } |\tau| > 1 \} \cup \{ \tau \in \mathcal{H}^+ \ | \ |\tau|=1 \text{ and } \mathrm{Re}(\tau)\leq 0\}
\end{equation*}
is a fundamental domain for the $\SL_2(\IZ)$-action; this means that each $SL_2(\IZ)$-orbit contains a unique representative in $\mathcal{F}$. These representatives have the form
\begin{equation} \label{equation::taus}
\tau = \frac{-b+i\sqrt{4ac-b^2}}{2a}, \ a,b,c \in \IZ,
\end{equation}
with
\begin{equation*}
-a < b \leq a < c \text{ or } 0 \leq b \leq a = c,
\end{equation*}
and $\Delta(\tau)=b^2-4ac<0$. If $\Delta<0$ denotes the discriminant of an imaginary quadratic order, we can hence choose $b_\Delta \in \{ 0, 1\}$ such that $b_\Delta \equiv \Delta \pmod 2$, and
\begin{equation} \label{equation::taudelta}
\tau_\Delta = \frac{-b_\Delta + i\sqrt{|\Delta|}}{2}
\end{equation}
is a CM-period of discriminant $\Delta$.

Klein's $j$-invariant
\begin{equation*}
j(\tau) = q^{-1} + 744 + 196884q + \cdots, \ q=e^{2\pi i \tau},
\end{equation*}
induces a bijection between the quotient $\SL_2(\IZ) \backslash \mathcal{H}^+$ and $\IC$. In fact, we can use this bijection to identify $\SL_2(\IZ) \backslash \mathcal{H}^+$ with the complex points of an algebraic curve $Y(1)$ over $\IQ$, and $Y(1)$ is isomorphic to the affine algebraic line $\IA^1_\IQ$. 

For a CM-period $\tau$, the value $j(\tau)$ is known to be algebraic and called a singular modulus. For each imaginary quadratic order $\mathcal{O}$, the polynomial
\begin{equation*}
H_{\mathcal{O}}(X)=\prod_{\substack{\tau \in \mathcal{F} \\ \Delta(\tau)=\disc(\mathcal{O})}}(X-j(\tau))
\end{equation*}
(``the class equation'') is irreducible over $\IQ$ (see \cite[Section 13]{Cox1989}). Consequently, the $\IQ$-Galois conjugates of a singular modulus $j(\tau)$ are precisely the singular moduli $j(\tau^\prime)$ with $\Delta(\tau^\prime)=\Delta(\tau)$.  In particular, each singular modulus $j(\tau)$ of discriminant $\Delta$ is $\IQ$-Galois conjugate to the singular modulus $j(\tau_{\Delta})$ where $\tau_\Delta$ is given by (\ref{equation::taudelta}).

Via the identification $Y(1)^n = \IA^n_\IQ$, an $n$-tuple $(j(\tau_1),\dots,j(\tau_n))$ of singular moduli gives rise to an algebraic point on $Y(1)^n$. We call these points the special points of $Y(1)^n$.

\subsection{Special subvarieties of linear varieties} 
\label{subsection::specialsubvarieties}

Assume that $X$, $\dim(X)\geq 1$, is a maximal special subvariety of a linear subvariety $L \subseteq Y(1)^n_\IC$. Our aim is to prove that $L$ is a linear subvariety itself. As a by-product, we also obtain that the $(n-1)$-dimensional linear special subvarieties in $Y(1)^n$ are of the form $V(z_i=z_j)$ for distinct $i,j \in \{ 1, \dots, n\}$, and that any positive-dimensional special linear subvariety is an intersection of these.

Let $\Phi_N(x,y)$ denote the $N$-th modular transformation polynomial (e.g.\ as defined in \cite[Chapter 11]{Cox1989} or \cite[Chapter 5]{Lang1987}). The partial degrees of these polynomials are
\begin{equation} \label{equation::modulardegree}
\deg_{x}(\Phi_N)=\deg_{y}(\Phi_N)=N\prod_{\substack{p \text{ prime} \\ p | N}} \left( 1 + \frac{1}{p}\right).
\end{equation}
From \cite[Section 2]{Edixhoven2005}, we know that, up to reordering coordinates, there exists a partition $n=n_0+n_1+n_2+\dots+n_r$ with positive integers $n_i$ such that 
\begin{equation} \label{equation::specialsubvariety}
X=P \times X_1 \times \cdots \times X_r \subseteq Y(1)^{n_0} \times Y(1)^{n_1} \times \cdots \times Y(1)^{n_r}
\end{equation}
where $P \in Y(1)^{n_0}$ is a special point and
\begin{equation} \label{equation::Xi}
X_i = V(\Phi_{N^{(i)}_{2}}(z_1^{(i)},z_2^{(i)}),\dots,\Phi_{N^{(i)}_{n_i}}(z_1^{(i)},z_{n_i}^{(i)})) \subseteq Y(1)^{n_i}, 1\leq i \leq r.
\end{equation}
(If $n_i=1$ this should be read as $X_i=Y(1)$.) 

We claim that $N_j^{(i)}=1$ for $1\leq i \leq r$ and $2 \leq j\leq n_i$. Since $\Phi_1(x,y)=x-y$, this is precisely what we want to show. By symmetry, it suffices to show that $N_{n_r}^{(r)} = 1$ if $n_r \geq 2$. Setting
\begin{equation*}
X_r^{\prime} = V(\Phi_{N^{(r)}_{2}}(z_1^{(r)},z_2^{(r)}),\dots,\Phi_{N^{(r)}_{n_r-1}}(z_1^{(r)},z_{n_r-1}^{(r)})) \times Y(1) \subseteq Y(1)^{n_r-1} \times Y(1)
\end{equation*}
we note that 
\begin{equation*}
P \times X_1 \times X_2 \times \cdots \times X_{r-1} \times X_r^\prime \nsubseteq L
\end{equation*}
because $X$ is maximal. This means that there exist closed points $P_{i} \in X_{i}$ ($1\leq i \leq r-1$) such that
\begin{equation}  \label{equation::notinL}
P \times P_1 \times P_2 \times \cdots \times P_{r-1} \times X_r^\prime \nsubseteq L.
\end{equation}
The curve
\begin{equation*}
C = X \cap (P \times P_1 \times \dots \times P_{r-1} \times Y(1)^{n_r})
\end{equation*}
is a special subvariety of the linear subvariety
\begin{equation*}
L^\prime = L \cap (P \times P_1 \times \dots \times P_{r-1} \times Y(1)^{n_r}),
\end{equation*}
and we consider both $C$ and $L^\prime$ simply as subvarieties of $Y(1)^{n_r}$. Let $\pi: Y(1)^{n_r} \rightarrow Y(1)^{n_r-1}$ be the projection to the first $n_r-1 \geq 1$ coordinates. Because of (\ref{equation::notinL}), the map $\pi|_{L^\prime}: L^\prime \rightarrow \pi(L^\prime)$ is finite. Consequently, it must be of degree $1$ (i.e., an isomorphism) and hence its restriction $\pi|_{C}:C \rightarrow \pi(C)$ has also degree $1$. By using (\ref{equation::modulardegree}), we infer that $1=\deg(\pi|_{C})=N_{n_r}^{(r)}\prod_{p | N_{n_r}^{(r)}} \left( 1 + 1/p\right)$ and thus $N_{n_r}^{(r)}=1$.

\subsection{Two estimates} We recall the following archimedean estimate from \cite[Lemma 1]{Bilu2013}: 
\begin{equation} \label{equation::jestimate}
\forall \tau \in \overline{\mathcal{F}}: \left\vert |j(\tau)|-e^{2\pi \mathrm{Im}(\tau)}\right\vert \leq 2079.
\end{equation}
From this, we can deduce a simple height estimate for singular moduli. Let $j(\tau)$, $\tau \in \mathcal{F}$, be a singular modulus of discriminant $\Delta$. From (\ref{equation::jestimate}), we infer that
\begin{equation} \label{equation::jhouse}
|j(\tau)| \leq e^{\pi |\Delta|^{1/2}}+2079 < 11 e^{\pi |\Delta|^{1/2}}.
\end{equation}
Since singular moduli are algebraic integers (\cite[Theorem 11.1]{Cox1989}), we have hence
\begin{equation} \label{equation::jheight}
H(j(\tau)) = \prod_{\substack{\tau \in \mathcal{F} \\ \Delta(\tau)=\disc(\mathcal{O})}} |j(\tau)|^{1/[\IQ(j(\tau)a):\IQ]} < 11 e^{\pi |\Delta|^{1/2}}.
\end{equation}

\subsection{Ring class fields} \label{subsection::ringclassfields} We refer to \cite{Cox1989} and \cite[Section 3.2]{Kuehne2017} for details on ring class fields. Let $K$ be an imaginary quadratic field and $\mathcal{O}_{K,f} = \IZ + f \mathcal{O}_{K}$ its (unique) order of conductor $f$. We let $K[f]/K$ denote the ring class field associated with $\mathcal{O}_{K,f}$. In terms of the universal norm residue symbol $(\cdot,K^{\mathrm{ab}}/K)$, $K[f]$ is the fixed field of $(\widehat{O}_{K,f}^\times K^\times/K^\times,K^{\mathrm{ab}}/K)$ so that $\Gal(K[f]/K) = \Pic(\mathcal{O}_{K,f})$ (cf.\ \cite{Kuehne2017}). 

Ring class fields are intimately related to singular moduli. Let $j(\tau)$ be the singular modulus associated with a CM-period $\tau \in K$ such that $\mathcal{O}(\tau)= \mathcal{O}_{K,f}$. Then $K(j(\tau))/K$ coincides with $K[f]$ (\cite[Theorem 11.1]{Cox1989}). In addition, the extension $K(j(\tau))/\IQ$ is Galois (\cite[Lemma 9.3]{Cox1989}).

For our main proof, we note some consequences of the well-known class number formula (\cite[Theorem 7.24]{Cox2011})
\begin{equation}
[K[f]:K] = \frac{[K[1]:K]f}{w_{K,f}} \prod_{p | f}\left( 1 - \genfrac(){}{}{\disc{(\mathcal{O}_K})}{p} \frac{1}{p}\right)
\end{equation}
with
\begin{equation}
w_{K,f} =
\begin{cases}
3 & \text{if } K = \IQ(\sqrt{-3}), f \neq 1, \\
2 & \text{if } K = \IQ(\sqrt{-1}), f \neq 1, \\
1 & \text{elsewise}.
\end{cases}
\end{equation}
Using the evident inclusion $K[c] \subset K[cf]$, we deduce that
\begin{equation*} \label{equation::classnumber_cf}
[K[cf]:K[f]]= \frac{w_{K,f}\cdot c}{w_{K,fc}} \cdot \prod_{\substack{ p \mid cf \\ p \nmid f}}\left( 1 - \genfrac(){}{}{\disc{(\mathcal{O}_K})}{p} \frac{1}{p}\right)
\end{equation*}
for any positive integers $c$ and $f$.
From this, we obtain the lower bound
\begin{equation*}
[K[cf]:K[f]] \geq \frac{c}{3} \cdot \left( \frac{1}{2} \right)^{\omega(c)}
\end{equation*}
where $\omega(n)$ is the number of prime divisors of $n$. In Section \ref{section::linearequations} below, we only need the weaker estimate
\begin{equation} \label{equation::classnumberformula}
[K[cf]:K[f]] \geq \frac{\sqrt{6}}{12} \cdot c^{1/2}.
\end{equation}

We also need some information on unions and intersections of ring class fields.

\begin{lemma} Let $K$ be an imaginary quadratic number field and $(f_1,f_2)$ a pair of positive integers. Then, we have
\begin{equation} \label{equation::rcf_intersection}
K[f_1] \cap K[f_2] = K[\gcd \{ f_1, f_2 \}]
\end{equation}
and
\begin{equation} \label{equation::rcf_union}
[K[\lcm \{ f_1, f_2 \}]: K[f_1] \cdot K[f_2]] \leq 3.
\end{equation}
\end{lemma}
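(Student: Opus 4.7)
The plan is to use the class field theory dictionary recalled above: setting $U_f := \widehat{\mathcal{O}}_{K,f}^\times$, the ring class field $K[f]$ corresponds to the open subgroup $U_f K^\times / K^\times$ of the idele class group $\IA_{K,f}^\times / K^\times$; under the Galois correspondence, intersections of the fields $K[f_i]$ correspond to products of the associated subgroups, while compositums correspond to intersections.

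For the first identity, it suffices to show $U_{f_1} U_{f_2} = U_{\gcd\{f_1,f_2\}}$ inside $\IA_{K,f}^\times$. Using $\widehat{\mathcal{O}}_{K,f} = \prod_p (\IZ_p + p^{v_p(f)} \mathcal{O}_{K,p})$, this reduces to a prime-by-prime calculation: assuming $v_p(f_1) \leq v_p(f_2)$, the local factor of $U_{f_2}$ is contained in that of $U_{f_1}$, which in turn coincides with the local factor of $U_{\gcd\{f_1,f_2\}}$, so their product is exactly the local factor of $U_{\gcd\{f_1,f_2\}}$. Quotienting by $K^\times$ and invoking Galois correspondence yields $K[f_1] \cap K[f_2] = K[\gcd\{f_1,f_2\}]$.

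For the second statement one must resist the temptation to conclude equality $K[f_1] K[f_2] = K[\lcm\{f_1,f_2\}]$ by the analogous local calculation $U_{f_1} \cap U_{f_2} = U_{\lcm\{f_1,f_2\}}$: the subgroup $(U_{f_1} \cap U_{f_2}) K^\times$ is in general strictly smaller than $U_{f_1} K^\times \cap U_{f_2} K^\times$, and this discrepancy is precisely what produces the nontrivial index. I would instead compute degrees. Combining the first identity with the standard formula $[K[f_1] K[f_2] : K] = [K[f_1] : K][K[f_2] : K] / [K[f_1] \cap K[f_2] : K]$ produces
\begin{equation*}
[K[\lcm\{f_1,f_2\}] : K[f_1] K[f_2]] = \frac{[K[\lcm\{f_1,f_2\}]:K] \cdot [K[\gcd\{f_1,f_2\}]:K]}{[K[f_1]:K] \cdot [K[f_2]:K]}.
\end{equation*}
Plugging in the class number formula recalled just above, the factor $[K[1]:K]$ cancels (appearing twice on each side), the conductors contribute $\lcm\{f_1,f_2\} \cdot \gcd\{f_1,f_2\}/(f_1 f_2) = 1$, and the Euler-type products over prime divisors cancel by inclusion-exclusion on the underlying prime sets. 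Only the ratio $w_{K,f_1} w_{K,f_2} / (w_{K,\lcm\{f_1,f_2\}} w_{K,\gcd\{f_1,f_2\}})$ survives.

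Since $w_{K,f} \in \{1,2,3\}$ with $w_{K,f} > 1$ only if $K \in \{\IQ(\sqrt{-1}), \IQ(\sqrt{-3})\}$ and $f > 1$, a brief case analysis (extremal case: $K = \IQ(\sqrt{-3})$, $f_1, f_2 > 1$, $\gcd\{f_1,f_2\} = 1$, yielding $9/3 = 3$) gives the desired bound $3$. The main obstacle is really the roots-of-unity subtlety in the second statement: the naive idelic computation misses the $w_{K,f}$ contribution altogether, and only the detour through the class number formula makes the origin of the constant $3$ transparent.
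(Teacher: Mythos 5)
Your proof is correct. For the intersection formula \eqref{equation::rcf_intersection} you follow essentially the same route as the paper: identify $K[f_i]$ with the fixed field of $(\widehat{\mathcal{O}}_{K,f_i}^\times K^\times/K^\times, K^{\mathrm{ab}}/K)$, check prime-by-prime that $\widehat{\mathcal{O}}_{K,f_1}^\times\widehat{\mathcal{O}}_{K,f_2}^\times=\widehat{\mathcal{O}}_{K,\gcd\{f_1,f_2\}}^\times$, and conclude by the Galois correspondence. For \eqref{equation::rcf_union} you genuinely diverge: the paper gives no argument at all and simply cites \cite[Section 3]{Allombert2015}, whereas you make the lemma self-contained by combining part one with the degree formula $[K[f_1]K[f_2]:K]=[K[f_1]:K][K[f_2]:K]/[K[f_1]\cap K[f_2]:K]$ (legitimate since the ring class fields are abelian, hence Galois, over $K$, and $K[f_i]\subseteq K[\lcm\{f_1,f_2\}]$ by the inclusion $K[c]\subseteq K[cf]$ already used in the paper) and the class number formula; after the cancellations $\lcm\cdot\gcd=f_1f_2$ and the inclusion–exclusion of the Euler factors, the index comes out exactly as $w_{K,f_1}w_{K,f_2}/(w_{K,\lcm\{f_1,f_2\}}w_{K,\gcd\{f_1,f_2\}})\leq 3$, with the extremal value $3$ only for $K=\IQ(\sqrt{-3})$ and coprime conductors $f_1,f_2>1$. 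Your explicit warning that the naive idelic identity $\widehat{\mathcal{O}}_{K,f_1}^\times\cap\widehat{\mathcal{O}}_{K,f_2}^\times=\widehat{\mathcal{O}}_{K,\lcm\{f_1,f_2\}}^\times$ does not yield the compositum, because intersecting after multiplying by $K^\times$ can enlarge the group, is exactly the point that makes the constant $3$ unavoidable; your computation has the added benefit of showing the bound is sharp and of pinpointing its source in the extra units of $\IQ(\sqrt{-3})$, which the paper's citation leaves opaque.
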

\begin{proof} For a place $\nu \in \Sigma_f(K)$, we let $\mathfrak{p}_\nu$ be the associated prime ideal of $\mathcal{O}_K$ and write $p(\nu)$ for its residue characteristic. In addition, we introduce the valuation $\mathrm{val}_\nu: K_\nu^\times \rightarrow \IZ$ by setting $\mathrm{val}_\nu(x)=n \text{ if } x \in \mathfrak{p}^n_\nu \setminus \mathfrak{p}^{n+1}_\nu$. Let $I_{K,f}$ denote the finite ideles of $K$. The profinite completion
\begin{equation*}
\widehat{\mathcal{O}}_{K,f} = \{ (x_\nu)\in \prod_{\nu \in \Sigma_f(K)} \widehat{\mathcal{O}}_\nu \ | \ x_\nu \bmod{\mathfrak{p}_\nu^{\mathrm{val}_{\mathfrak{p}}(f)}} \in \IZhat_{p(\nu)} \bmod{\mathfrak{p}_\nu^{\mathrm{val}_{\mathfrak{p}}(f)}} \} \subseteq \widehat{\mathcal{O}}_K
\end{equation*}
has units
\begin{equation*}
\widehat{\mathcal{O}}_{K,f}^\times = \{ (x_\nu)\in \prod_{\nu \in \Sigma_f(K)} \widehat{\mathcal{O}}_\nu^\times \ | \ x_\nu \bmod{\mathfrak{p}_\nu^{\mathrm{val}_{\mathfrak{p}}(f)}} \in \IZhat_{p(\nu)} \bmod{\mathfrak{p}_\nu^{\mathrm{val}_{\mathfrak{p}}(f)}} \} \subseteq I_{K,f}.
\end{equation*}
Hence, the subgroups $\widehat{\mathcal{O}}_{K,f_1}^\times$ and $\widehat{\mathcal{O}}_{K,f_2}^\times$ in $I_{K,f}$ generate evidently $\widehat{\mathcal{O}}_{K,\gcd\{f_1,f_2\}}^\times$. With the surjectivity of the universal norm residue symbol $(\cdot, K^{\mathrm{ab}}/K)$, we infer that $(\widehat{O}_{K,f_1}^\times K^\times/K^\times,K^{\mathrm{ab}}/K)$ and $(\widehat{O}_{K,f_2}^\times K^\times/K^\times,K^{\mathrm{ab}}/K)$ generate $(\widehat{O}_{K,\gcd\{f_1, f_2\}}^\times K^\times/K^\times,K^{\mathrm{ab}}/K)$.
This is equivalent to the first assertion (\ref{equation::rcf_intersection}).

For the second assertion (\ref{equation::rcf_union}), we refer to \cite[Section 3]{Allombert2015}.
\end{proof}

Finally, we recall an important lemma governing the intersections of ring class fields associated to several distinct imaginary quadratic fields. This is an essential tool in our argument.

\begin{lemma}{(\cite[Corollary 1]{Kuehne2017})} \label{lemma::ringclassfieldintersections} Let $K_1, \dots, K_r$ ($r \geq 2$) be distinct totally imaginary quadratic number fields and $f_1,\dots, f_r$ arbitrary positive integers. Write $L = K_r \prod_{i=1}^{r-1}K_i[f_i]$. Then, the Galois group $\Gal(K_r[f_r] \cap L/K_r)$ is annihilated by $2^{r+1}$.
\end{lemma}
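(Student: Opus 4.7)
Set $M := K_r[f_r] \cap L$. First I note that $M/\IQ$ is Galois, since both $K_r[f_r]$ and $L$ (as a compositum of ring class fields) are Galois over $\IQ$ by \cite[Lemma 9.3]{Cox1989}. Consequently $\Gal(M/K_r)$ is an abelian quotient of $\Gal(K_r[f_r]/K_r)$, and complex conjugation $c$ acts on it by inversion---this is the characteristic dihedral feature of ring class fields. The plan is to show that the additional constraint $M \subset L$ combined with this inversion relation forces $\Gal(M/K_r)$ to be annihilated by $2^{r+1}$.

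The main step is to exploit the compositum structure via the embedding
\begin{equation*}
\Gal(L/K_r) \hookrightarrow \prod_{i=1}^{r-1} \Gal(K_r K_i[f_i]/K_r)
\end{equation*}
coming from the restriction maps (with trivial joint kernel). For each $i < r$, since $K_i$ and $K_r$ are distinct imaginary quadratic fields, $K_i \cap K_r = \IQ$, and we obtain the short exact sequence
\begin{equation*}
1 \to \Gal(K_r K_i[f_i]/K_r K_i) \to \Gal(K_r K_i[f_i]/K_r) \to \Gal(K_r K_i/K_r) \to 1.
\end{equation*}
The kernel injects into $\Gal(K_i[f_i]/K_i)$, on which $c$ acts by inversion (since $c|_{K_i}$ is the non-trivial element of $\Gal(K_i/\IQ)$), while the quotient $\IZ/2\IZ$ records that $c$ does not fix $K_i$ either. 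Writing $\sigma \in \Gal(M/K_r)$ componentwise as $(\sigma_i)_{i=1}^{r-1}$ under this embedding, the relation $c \sigma c^{-1} = \sigma^{-1}$ forced by $M \subset K_r[f_r]$ translates into a system of compatibility equations between the ambient $c$-action and the inversion structure inside each factor.

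A cocycle-style computation should then show that each $\sigma_i^2$ lies in the $c$-inversion subgroup of its factor modulo controlled 2-torsion, and iterating over the $r-1$ factors together with the initial inversion relation forces $\sigma^{2^{r+1}} = 1$ for every $\sigma \in \Gal(M/K_r)$. The most delicate step, which I expect to be the main obstacle, is the uniform bookkeeping of 2-torsion: the groups $\Pic(\mathcal{O}_{K_i,f_i})$ can have arbitrarily large 2-rank, and each of the short exact sequences above splits only up to 2-torsion, so the contributions compound to the exponent $2^{r+1}$ rather than the $2^{r-1}$ that an odd-order argument would yield. Induction on $r$ (comparing $L$ with $K_r \prod_{i<r-1} K_i[f_i]$ and appealing to (\ref{equation::rcf_intersection})--(\ref{equation::rcf_union}) to control the discrepancy) provides a convenient framework for carrying out this bookkeeping cleanly.
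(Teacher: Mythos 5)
First, a structural point: the paper does not prove this lemma at all --- it is imported verbatim as \cite[Corollary 1]{Kuehne2017}, so there is no in-paper argument for your attempt to be measured against; the question is whether your sketch could stand in for that citation, and as written it cannot. The genuine gap is that the entire content of the lemma is concentrated in the step you defer with ``a cocycle-style computation \emph{should} then show \dots'' and ``the most delicate step, which I expect to be the main obstacle''. Everything you actually establish --- that $M=K_r[f_r]\cap L$ is Galois over $\IQ$, that complex conjugation inverts $\Gal(M/K_r)$ because $M\subseteq K_r[f_r]$, the embedding of $\Gal(L/K_r)$ into $\prod_{i<r}\Gal(K_rK_i[f_i]/K_r)$, and the exact sequences $1\to\Gal(K_rK_i[f_i]/K_rK_i)\to\Gal(K_rK_i[f_i]/K_r)\to\Gal(K_rK_i/K_r)\to 1$ --- is routine setup and would apply equally to situations where no exponent bound holds; for instance nothing written so far uses that the $K_i$ are \emph{distinct} from $K_r$ in any quantitative way. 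The point of the lemma is precisely the interaction of two different generalized dihedral structures (inversion by $c$ relative to $K_r$ on all of $\Gal(M/K_r)$, versus inversion relative to $K_i$ on the $K_i[f_i]$-part of each factor, valid only after passing to the index-$2$ subgroup over $K_iK_r$ and only up to the choice of lift of $c$), and it is exactly the bookkeeping of those lift ambiguities and index-$2$ passages that produces a power of $2$ as exponent. You name this difficulty but do not carry it out, and consequently the claimed compounding to $2^{r+1}$ (rather than $4$, or $2^{r-1}$, or no bound at all) is asserted, not derived.

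Concretely, to close the gap you would need to: (i) fix a lift $\tilde c$ of complex conjugation to the compositum $K_r[f_r]L$ and track how the relation $\tilde c\,\tilde\sigma\,\tilde c^{-1}\equiv\tilde\sigma^{-1}$ (valid on $M$) interacts, factor by factor, with the relation $\tilde c\,\tau\,\tilde c^{-1}=\tau^{-1}$ for $\tau\in\Gal(K_rK_i[f_i]/K_rK_i)$, remembering that two lifts of $c$ differ by an element of the abelian group and that replacing a lift changes ``inversion'' by an inner-type correction; (ii) quantify the index-$2$ losses from $\Gal(K_rK_i/K_r)$ and from comparing $\Gal(M/K_r)$ with its image in each factor; and (iii) show that these losses multiply across the $r-1$ factors to give exactly the exponent $2^{r+1}$ for every $\sigma\in\Gal(M/K_r)$, not merely for $\sigma$ in some subgroup of small index. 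Your suggested induction on $r$ using (\ref{equation::rcf_intersection}) and (\ref{equation::rcf_union}) also does not obviously help: those statements control ring class fields of a \emph{single} imaginary quadratic field, whereas the inductive discrepancy here involves intersections across distinct fields, which is the very thing being proved. As it stands the proposal is a plausible plan in the same dihedral spirit as K\"uhne's argument, but it is not a proof, and it should not be presented as a substitute for the citation.
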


\subsection{Explicit class number bounds} \label{subsection::classnumbers}

To each quadratic extension $K/\IQ$ there is associated a Dirichlet character $\chi_K(n)=(\disc(\mathcal{O}_K)/n)$ by means of Kronecker's symbol $(d/n)$; for details we refer to \cite[Section 5.B]{Cox1989}. Let $L(1,\chi_K)$ be the associated Dirichlet $L$-function. The Siegel-Tatuzawa Theorem \cite[Theorem 2]{Tatuzawa1951} (see also \cite{Hoffstein1980/81}) implies that there exists (at most) one imaginary quadratic field $K_\ast$ such that
\begin{equation*}
L(1,\chi_K)> (5.4 \cdot 10^{-2}) \cdot \left|\disc(\mathcal{O}_K)\right|^{-1/12}
\end{equation*}
for any other imaginary quadratic field $K \neq K_\ast$ with $\disc(\mathcal{O}_K) \geq 1.63 \cdot 10^5$. Using the class number formula \cite[(15) on p.\ 49]{Davenport2000}, we infer that
\begin{equation*}
\# \Pic(\mathcal{O}_K) = \frac{(\# \mathcal{O}_K^\times)\left|\disc(\mathcal{O}_K)\right|^{1/2}}{2\pi} L(1,\chi_K) > (1.7 \cdot 10^{-2}) \cdot \left|\disc(\mathcal{O}_K)\right|^{5/12}
\end{equation*}
whenever $\disc(\mathcal{O}_K) \geq 1.63 \cdot 10^5$. The restriction on the discriminant can be lifted by worsening the constant. In fact, the estimate
\begin{equation*}
\# \Pic(\mathcal{O}_K) > (6.7 \cdot 10^{-3}) \cdot \left|\disc(\mathcal{O}_K)\right|^{5/12}
\end{equation*}
is true for arbitrary imaginary quadratic fields $K\neq K_\ast$. Since $p^{1/6}(1-1/p)\geq 1$ for all primes $p \geq 5$, we have the following variant of (\ref{equation::classnumberformula}):
\begin{equation*}
\frac{\# \Pic(\mathcal{O}_{K,f})}{\# \Pic(\mathcal{O}_K)}=[K[f]:K[1]] \geq \frac{f^{5/6}}{9}.
\end{equation*}
This can be used to deduce the more general bound
\begin{equation} \label{equation::classnumberbound}
\# \Pic(\mathcal{O}) > (7.4 \cdot 10^{-4}) \cdot \left|\disc(\mathcal{O})\right|^{5/12}
\end{equation}
for any imaginary quadratic order $\mathcal{O}$ not contained in $K_\ast$.

\subsection{Genus number bounds} \label{subsection::genustheory} For any imaginary quadratic field $K$, we have
\begin{equation*}
\dim_{\IF_2}(\Pic(\mathcal{O})[2]) \leq 1 + 2\omega(\disc(\mathcal{O}))
\end{equation*}
(cf.\ \cite[Proposition 6.3]{Zhang2005}). It is easy to deduce from the explicit bounds on the arithmetic function $\omega(\cdot)$ in the literature (e.g., \cite[Th\'eor\`eme 11]{Robin1983}) and the fact that $\# \Pic(\mathcal{O}) = 1$ for $\disc(\mathcal{O}) \leq 11$ the weak estimate
\begin{equation*}
\dim_{\IF_2}(\Pic(\mathcal{O})[2]) \leq 1 + 3.05 \log\left|\disc(\mathcal{O})\right| \leq 4 \log\left|\disc(\mathcal{O})\right|,
\end{equation*}
which is sufficient for our purposes. In fact, we only use this observation only in the following form: For every integer $n\geq 1$, the standard estimate $e^x \geq x^n / n!$ for $x \geq 0$ and Stirling's approximation 
\begin{equation*}
n! \leq e n^{n+1/2} e^{-n}
\end{equation*}
imply that
\begin{equation} \label{equation::pic2}
\dim_{\IF_2}(\Pic(\mathcal{O})[2]) \leq 4\sqrt[n]{n!} \cdot \left|\disc(\mathcal{O})\right|^{1/n} < 4 n^2 |\disc(\mathcal{O\textsl{•}})|^{1/n}.
\end{equation}
\section{Linear Equations in Distinct Singular Moduli}
\label{section::linearequations}

This section is the heart of our article. Its purpose is to establish the following two lemmas.

\begin{lemma} \label{lemma::samefield} 
Let $a_1,a_2,\dots,a_k, b$ be algebraic numbers and assume that $a_1,\dots,a_k$ are non-zero. If $j(\tau_i)$, $i\in \{ 1,\dots, k\}$, are distinct singular moduli satisfying the linear equation
\begin{equation} \label{equation::samefield}
a_1 j(\tau_1)+a_2 j(\tau_2) + \cdots + a_k j(\tau_k) +b = 0,
\end{equation}
then
\begin{equation*}
|\Delta(\tau_i)|^{1/2} < c_1(\underline{a},b)
\end{equation*}
for all $i \in \{ 1, \dots, k \}$. In particular, (\ref{equation::samefield}) has at most finitely many solutions of this form.
\end{lemma}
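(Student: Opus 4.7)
The plan is to combine a Galois-theoretic rigidification of the singular modulus with maximum discriminant against the class number estimates from Subsections \ref{subsection::ringclassfields} and \ref{subsection::classnumbers}. Relabel so that $|\Delta(\tau_1)| = \max_i |\Delta(\tau_i)|$, set $K = \IQ(\tau_1)$, and let $f$ be the conductor of $\mathcal{O}(\tau_1)$, so that $K(j(\tau_1)) = K[f]$. Solving (\ref{equation::samefield}) for $j(\tau_1)$ exhibits
$$j(\tau_1) = -\frac{1}{a_1}\bigl(b + a_2 j(\tau_2) + \cdots + a_k j(\tau_k)\bigr) \in F := \IQ(a_1,\ldots,a_k,b,j(\tau_2),\ldots,j(\tau_k)),$$
and hence $K[f] \subseteq K \cdot F$. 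The goal is then to bound $[K[f]:K]$ from above by invariants of the other $\tau_\ell$, $a_i$, $b$, and to compare that against the polynomial lower bound $\#\Pic(\mathcal{O}(\tau_1)) \gg |\Delta(\tau_1)|^{5/12}$ from (\ref{equation::classnumberbound}); unless $|\Delta(\tau_1)|$ is bounded, the two are incompatible.

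To produce the upper bound I would split $\{2,\ldots,k\}$ into the set $T = \{\ell : \IQ(\tau_\ell) = K\}$ with conductors $f_\ell$, and the remaining indices, which fall into $s \leq k-1$ distinct other imaginary quadratic fields $K_1',\ldots,K_s'$. For $\ell \in T$ one has $K(j(\tau_\ell)) = K[f_\ell]$, and iterating (\ref{equation::rcf_union}) embeds their compositum into $K[L]$ (with $L = \lcm_{\ell \in T} f_\ell$) at index bounded by a power of $3$. For the indices outside $T$, Lemma \ref{lemma::ringclassfieldintersections} forces $\Gal\bigl(K[f] \cap K \cdot K_1'[\cdots]\cdots K_s'[\cdots]\,/\,K\bigr)$ to be annihilated by $2^{s+2}$; being a subquotient of $\Pic(\mathcal{O}(\tau_1))$, its order is then at most $\#\Pic(\mathcal{O}(\tau_1))[2]^{s+2}$, which via (\ref{equation::pic2}) is bounded by $2^{(s+2)\cdot 4n^2 |\Delta(\tau_1)|^{1/n}}$ for any $n \geq 1$. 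Absorbing in the coefficient contribution $[K(a_1,\ldots,a_k,b):K]$, controlled by the heights of the $a_i$ and $b$ via Subsection \ref{subsection::heights}, and using (\ref{equation::rcf_intersection}), one obtains
$$[K[f]:K[\gcd(L,f)]] \;\leq\; 3^{|T|-1} \cdot 2^{(s+2)\cdot 4n^2 |\Delta(\tau_1)|^{1/n}} \cdot [K(a_1,\ldots,a_k,b):K].$$
The left-hand side is bounded below by $(\sqrt{6}/12)(f/\gcd(L,f))^{1/2}$ via (\ref{equation::classnumberformula}), and since $f_\ell^2 |\disc(\mathcal{O}_K)| = |\Delta(\tau_\ell)| \leq |\Delta(\tau_1)| = f^2 |\disc(\mathcal{O}_K)|$, every $f_\ell$ divides something comparable to $f$, so $f/\gcd(L,f)$ must grow polynomially in $|\Delta(\tau_1)|$. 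Optimizing $n \asymp \log|\Delta(\tau_1)|/\log\log|\Delta(\tau_1)|$ reduces the sub-exponential upper bound to a polylogarithmic factor, and the resulting inequality translates into an effective bound $|\Delta(\tau_1)|^{1/2} \leq c_1(\underline{a},b)$.

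The main obstacle is simultaneously exploiting Lemma \ref{lemma::ringclassfieldintersections} and the genus-theoretic bound (\ref{equation::pic2}): the former gives clean modulo-$2^{s+2}$ control but only after paying for the $2$-torsion of $\Pic(\mathcal{O}(\tau_1))$, which must be kept sub-polynomial in $|\Delta(\tau_1)|$ uniformly. A secondary difficulty is the degenerate regime $\gcd(L,f) = f$, where the conductor gain from (\ref{equation::classnumberformula}) collapses: here an archimedean fallback using (\ref{equation::jestimate}) and (\ref{equation::jhouse}) --- applied to the fundamental-domain representative $\tau_{\Delta(\tau_1)}$ obtained by Galois conjugation --- together with Liouville's inequality from Subsection \ref{subsection::heights} must rule out cancellation in the linear equation. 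Finally, the exceptional Siegel-Tatuzawa field $K_\ast$ in (\ref{equation::classnumberbound}) has to be treated separately by falling back on the polynomial factor from (\ref{equation::classnumberformula}).
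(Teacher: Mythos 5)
Your scheme reduces everything to comparing an upper bound for $[K[f]:K[\gcd(L,f)]]$ (coming from the coefficient field, the $2^{s+2}$-torsion of Lemma \ref{lemma::ringclassfieldintersections}, and the same-field moduli) with the lower bound $(\sqrt{6}/12)(f/\gcd(L,f))^{1/2}$ from (\ref{equation::classnumberformula}), and this collapses exactly in the hardest regime. Nothing forces $f/\gcd(L,f)$ to grow with $|\Delta(\tau_1)|$: if the other moduli lying in $K$ have conductors equal to $f$ (all discriminants equal), or conductors dividing $f$ with bounded quotient, then both sides of your inequality are bounded in terms of $k$ and $[N_0:\IQ]$ alone and no restriction on $|\Delta(\tau_1)|$ follows, so your assertion that ``$f/\gcd(L,f)$ must grow polynomially'' is simply false. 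The ``archimedean fallback'' you invoke for $\gcd(L,f)=f$ is therefore not a side case but the core of the proof, and it is not developed: when several moduli share the dominant discriminant one must first conjugate so that $\tau_1$ is the dominant fundamental representative and use that every other representative of the same discriminant has $\mathrm{Im}(\tau_i)\le f|\Delta|^{1/2}/4$; and in the mixed regime where the same field contributes conductors close to $f$ but distinct from it, neither the degree count nor a direct archimedean estimate suffices. The paper's way out (its Steps 1--3) is an iteration you have no substitute for: counting the automorphisms that can fix the subsum (Lemma \ref{lemma::automorphism}, itself bootstrapped from the equal-discriminant case) shows that a Galois group of size $\gg(\lcm\{f,f_{l+1}\}/f_{l+1})^{1/2}$ contains an element moving the subsum, which produces a new linear equation with one fewer discriminant class, with $k$ replaced by $2k$ and $H_0$ by $2H_0^2$, repeated at most $k-1$ times until the equal-discriminant case applies. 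Without this mechanism your argument cannot conclude.

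There are further gaps of a quantitative and structural nature. Your torsion estimate $(\#\Pic(\mathcal{O}(\tau_1))[2])^{s+2}\le 2^{(s+2)\cdot 4n^2|\Delta(\tau_1)|^{1/n}}$ optimized at $n\asymp\log|\Delta|/\log\log|\Delta|$ is not polylogarithmic: it is $\exp\!\bigl(c\,(\log|\Delta|)^3/(\log\log|\Delta|)^2\bigr)$, which is superpolynomial and cannot be dominated by the $|\Delta|^{5/12}$ lower bound (\ref{equation::classnumberbound}); to make the exponentiated bound subpolynomial you must use the genus-theory bound in the form $\dim_{\IF_2}\Pic[2]\le 1+2\omega(\Delta)\ll\log|\Delta|/\log\log|\Delta|$, or follow the paper's bookkeeping with (\ref{equation::pic2}). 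Moreover, Lemma \ref{lemma::ringclassfieldintersections} as stated says nothing about composita that also involve the coefficient field $N$ and the same-field ring class fields; the paper needs the field diagram of Step 4 together with Lemma \ref{lemma::galoistheory} to pay for this with a factor $[N:\IQ]^2$, and merely ``absorbing'' $[K(a_1,\dots,a_k,b):K]$ (which is a degree, not something controlled by heights) does not do this. Finally, your treatment of the exceptional field $K_\ast$ via ``the polynomial factor from (\ref{equation::classnumberformula})'' gives nothing when the conductor of $\mathcal{O}(\tau_1)$ is small (e.g.\ $f=1$); the paper instead moves the already-bounded non-$K_\ast$ moduli into the constant term and reruns the single-field argument, which again requires precisely the Step 2--3 machinery missing from your proposal.
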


\begin{lemma} \label{lemma::automorphism} Let $a_1,a_2,\dots,a_k$ be non-zero algebraic numbers. Let further $j(\tau_1),\dots,j(\tau_k)$ be distinct singular moduli and set $L=\IQ(a_1j(\tau_1)+a_2j(\tau_2)+\cdots+a_kj(\tau_k))$. For each $i \in \{ 1, \dots, k \}$, we have 
\begin{equation*}
|\Delta(\tau_i)|^{1/2}<c_2(\underline{a})
\end{equation*}
or
\begin{equation} \label{equation::numberautomorphisms}
\left[ L(\tau_i,j(\tau_i)):L(\tau_i)\right] \leq \# \{ i^\prime \in [1,k] \cap \IZ \ | \ \Delta(\tau_{i^\prime})= \Delta(\tau_i) \} \leq k.
\end{equation}
\end{lemma}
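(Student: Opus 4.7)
The plan is to invoke Lemma \ref{lemma::samefield} on a linear relation produced by a pigeonhole argument inside $\Gal(\overline{\IQ}/L(\tau_i))$. Fix an index $i$, set $\alpha = \sum_{l=1}^k a_l j(\tau_l) \in L$, $m = [L(\tau_i, j(\tau_i)) : L(\tau_i)]$, and $N = \#\{l : \Delta(\tau_l) = \Delta(\tau_i)\}$. If $m \leq N$ the conclusion already holds, so I assume $m > N$ and aim to deduce $|\Delta(\tau_i)|^{1/2} < c_2(\underline{a})$.

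The $\Gal(\overline{\IQ}/L(\tau_i))$-orbit of $j(\tau_i)$ has cardinality $m$ and consists entirely of singular moduli of discriminant $\Delta(\tau_i)$. Since only $N < m$ such moduli can appear among $\{j(\tau_1),\dots,j(\tau_k)\}$, there exists $\sigma \in \Gal(\overline{\IQ}/L(\tau_i))$ with $\sigma(j(\tau_i)) \notin \{j(\tau_1),\dots,j(\tau_k)\}$. Applying $\sigma$ to the defining equation for $\alpha$ (which $\sigma$ fixes, as $\alpha \in L \subseteq L(\tau_i)$) and subtracting yields
\begin{equation*}
\sum_{l=1}^k a_l \bigl( j(\tau_l) - \sigma(j(\tau_l)) \bigr) = 0.
\end{equation*}
Regrouping by distinct singular moduli produces a relation $\sum_\mu c_\mu \mu = 0$, in which each $c_\mu$ is a $\{-1,0,+1\}$-linear combination of at most $2k$ of the given $a_l$.

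The crucial point is that $\mu^{\ast} := \sigma(j(\tau_i))$ appears with coefficient exactly $-a_i \neq 0$: by the choice of $\sigma$, $\mu^{\ast}$ cannot equal any $j(\tau_l)$, while $\sigma(j(\tau_l)) = \mu^{\ast}$ forces $l = i$ by injectivity of $\sigma$ and distinctness of the $j(\tau_l)$. After discarding vanishing coefficients we are thus left with a non-trivial linear equation among distinct singular moduli with all coefficients non-zero, containing a term of discriminant $\Delta(\tau_i)$. Lemma \ref{lemma::samefield} (with $b = 0$) then yields $|\Delta(\tau_i)|^{1/2} < c_1(\underline{c}, 0)$; since the $c_\mu$ are short $\pm a_l$-combinations, this bound is controlled solely by $\underline{a}$ and $k$, and we define $c_2(\underline{a})$ to be that upper bound. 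The only remaining work, rather than a conceptual obstacle, is the bookkeeping required to trace the explicit form of $c_1$ from Lemma \ref{lemma::samefield} through this combinatorial construction and produce a clean closed-form $c_2(\underline{a})$.
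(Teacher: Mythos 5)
Your argument is essentially the paper's own derivation of this lemma from Lemma \ref{lemma::samefield} (Step 1 of the combined proof): both take the difference between the defining relation and its image under some $\sigma \in \Gal(\IQbar/L(\tau_i))$, which fixes $\alpha$, and feed the resulting homogeneous relation in distinct singular moduli into Lemma \ref{lemma::samefield}. Your pigeonhole choice of $\sigma$ --- picking one that sends $j(\tau_i)$ to a modulus outside the list $\{j(\tau_1),\dots,j(\tau_k)\}$, which exists precisely when $[L(\tau_i,j(\tau_i)):L(\tau_i)]$ exceeds the number of indices of discriminant $\Delta(\tau_i)$ --- is a clean contrapositive of the paper's counting of the automorphisms for which the tracked term cancels, and it even spares you the paper's discussion of when such cancellation can happen.

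One step is wrong as written, though easily repaired: $\sigma$ fixes $\alpha$, but it has no reason to fix the coefficients $a_1,\dots,a_k$, which are arbitrary algebraic numbers and need not lie in $L(\tau_i)$. The difference is therefore $\sum_l \bigl(a_l j(\tau_l) - \sigma(a_l)\sigma(j(\tau_l))\bigr) = 0$, not $\sum_l a_l\bigl(j(\tau_l)-\sigma(j(\tau_l))\bigr) = 0$, and the surviving coefficient of $\mu^{\ast}=\sigma(j(\tau_i))$ is $-\sigma(a_i)$ rather than $-a_i$ (your observation that only $l=i$ can contribute to $\mu^{\ast}$ is correct and is the key point). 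This does not affect the conclusion: the conjugates $\sigma(a_l)$ lie in the normal closure $N_0$ of $\IQ(a_1,\dots,a_k)$ and have the same heights as the $a_l$, so after regrouping the coefficients still have height at most $2H_0^2$, where $H_0$ is the height of $(a_1,\dots,a_k)$, and the bound supplied by Lemma \ref{lemma::samefield} is still controlled by $\underline{a}$ and $k$ alone; this is exactly the bookkeeping the paper performs (its cancellation condition reads $a_i^{\widetilde{\sigma}} = a_{i'}$, acknowledging the Galois action on the coefficients). With this correction your proof matches the paper's, up to one structural remark: since the paper needs restricted versions of this lemma inside the proof of Lemma \ref{lemma::samefield} itself, it states Step 1 conditionally on Lemma \ref{lemma::samefield} for moduli whose discriminants lie in $\{\Delta(\tau_1),\dots,\Delta(\tau_k)\}$ --- a hypothesis your construction automatically respects, because every singular modulus in your new equation is a conjugate of one of the original ones and so has one of the original discriminants.
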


We have not stated the above constants $c_1(\underline{a},b)$ and $c_2(\underline{a})$ explicitly. The reason for this is that we prove subcases of the above lemmas in increasing generality and that the constants evolve during these generalizations. For the general case of Lemma \ref{lemma::samefield}, an explicit value for $c_1(\underline{a},b)$ is given at the end of the following proof. In Section \ref{section::andreoort}, it gives rise to the explicit bound of Theorem \ref{theorem::main}.

\begin{proof}[Proof of Lemmas \ref{lemma::samefield} and \ref{lemma::automorphism}] We prove subcases of these lemmas in increasing generality, eventually arriving at their full generality. Our strategy is the following:
\begin{enumerate}
\item[Step 1.] Let $\Delta_1, \dots, \Delta_l$ be discriminants of imaginary quadratic orders. We prove that if Lemma \ref{lemma::samefield} is true for all singular moduli $j(\tau_1), j(\tau_2), \dots, j(\tau_k)$ satisfying $\Delta(\tau_i) \in \{ \Delta_1, \dots, \Delta_l \}$, then so is Lemma \ref{lemma::automorphism} under the same restriction on the singular moduli.
\item[Step 2.] We prove Lemma \ref{lemma::samefield}, and hence Lemma \ref{lemma::automorphism}, in the case where $\Delta(\tau_1)=\cdots=\Delta(\tau_k)$.
\item[Step 3.] We prove Lemma \ref{lemma::samefield}, and hence Lemma \ref{lemma::automorphism}, in the case where all $\tau_i$ are contained in the same imaginary quadratic field. We make essential use of the part of Lemma \ref{lemma::automorphism} proven in Step 2.
\item[Step 4.] We establish Lemma \ref{lemma::samefield} in the general case by combining Lemma \ref{lemma::ringclassfieldintersections} with the part of Lemma \ref{lemma::automorphism} proven in Step 3.
\end{enumerate}

Throughout the proof, we write $f_i \in \IZ^{\geq 1}$ for the conductor of the order $\mathcal{O}(\tau_i)$. We also write $N$ (resp.\ $N_0$) for the normal closure of $\IQ(a_1,\cdots,a_k,b)$ (resp.\ $\IQ(a_1,\cdots,a_k)$) and use $H$ (resp.\ $H_0$) as an abbreviation for the (normalized affine) Weil height of $(a_1,a_2,\dots,a_k,b) \in \IA^{k+1}(\IQbar)$ (resp.\ $(a_1,a_2,\dots,a_k) \in \IA^k(\IQbar)$) as defined in Section \ref{subsection::heights}. For any algebraic number $b \in \IQbar$, we define its house
\begin{equation*}
\house{b} = \max_{\nu \in \Sigma_\infty(\IQ(b))} \left\{ |b|_\nu\right\}.
\end{equation*}
To simply our notation, we write $\log^+(\house{b})$ for $\log \max \{ 1, \house{b} \} $. Furthermore, we adopt the convention to write singular moduli always in the form $j(\tau)$ with $\tau \in \mathcal{F}$.

\textit{Step 1.} Let $j(\tau_1),\dots,j(\tau_k)$ be as stipulated in Lemma \ref{lemma::automorphism}. Assume furthermore that Lemma \ref{lemma::samefield} is proven for all singular moduli $j(\tau_1^\prime),\dots,j(\tau^\prime_{k^\prime})$ with $\{ \Delta(\tau_1^\prime),\dots, \Delta(\tau_{k^\prime}^\prime) \} \subseteq \{ \Delta(\tau_1),\dots, \Delta(\tau_k) \}$. The extension $L^\prime=L(\tau_i,j(\tau_i))/L(\tau_i)$ is Galois as $\IQ(\tau_i,j(\tau_i))/\IQ(\tau_i)$ is so. Therefore, it suffices to bound the number of automorphisms $\sigma$ of $L^\prime$ satisfying
\begin{equation} \label{equation::noninvariance2}
a_1 j(\tau_1) + \cdots + a_k j(\tau_k) = (a_1 j(\tau_1) + \cdots + a_k j(\tau_k))^\sigma.
\end{equation}
Lifting $\sigma$ to some $\widetilde{\sigma} \in \Gal(\IQbar/L(\tau_i))$ and rearranging terms, we can rewrite (\ref{equation::noninvariance2}) as a linear equation
\begin{equation} \label{equation::noninvariance3}
a_1^\prime j(\tau^\prime_1) + \cdots + a_{k^\prime}^\prime j(\tau^\prime_{k^\prime}) = 0
\end{equation} 
to which Lemma \ref{lemma::samefield} is applicable. If $j(\tau_i)$ does not appear in (\ref{equation::noninvariance3}) because of cancellation, there has to exist some $i^\prime \in \{1, \dots, k\}$ such that $j(\tau_i)^{\widetilde{\sigma}}=j(\tau_{i^\prime})$ and $a_i^{\widetilde{\sigma}}=a_{i^\prime}$. We note that $\Delta(\tau_i)=\Delta(\tau_{i^\prime})$. Since $j(\tau_i)$ generates $L^\prime$ over $L(\tau_i)$, there is at most one automorphism $\sigma \in \Gal(L^\prime/L(\tau_i))$ whose liftings $\widetilde{\sigma}$ can satisfy this for a fixed $i^\prime$. Consequently, the bound (\ref{equation::numberautomorphisms}) is valid in this case. In the case where $j(\tau_i)$ still appears in (\ref{equation::noninvariance3}), we obtain immediately $|\Delta(\tau_i)|^{1/2} < c_1(a_1^\prime, \dots, a_{k^\prime}^\prime, 0)$ by Lemma \ref{lemma::samefield}.

\textit{Step 2.} Let $(j(\tau_1),\dots,j(\tau_k))$ be a solution of (\ref{equation::samefield}) in distinct singular moduli such that $\Delta(\tau_1)=\cdots = \Delta(\tau_k)$. This means that there is an imaginary quadratic field $K$ such that $\tau_1,\dots,\tau_k \in K$ and that the conductors $f_1, \dots, f_k$ are all equal. Write $\Delta$ for the discriminant of $\mathcal{O}_K$ and $f$ for $f_1 = \dots = f_k$ so that $\Delta(\tau_i)=f^2 \Delta$. After applying a Galois automorphism to (\ref{equation::samefield}), we may assume that $\mathrm{Im}(\tau_1)=f|\Delta|^{1/2}/2$. From the explicit description in (\ref{equation::taus}), we infer then that $\mathrm{Im}(\tau_i) \leq f|\Delta|^{1/2}/4$ for $i \in \{ 2,\dots,k \}$. 
 
The bound on $f|\Delta|^{1/2}$ follows from the archimedean estimate
\begin{equation} \label{equation::inequality1}
|j(\tau_1)| = |a_1^{-1}a_2 j(\tau_2) + \cdots + a_1^{-1}a_k j(\tau_k) + a_1^{-1}b| \leq \sum_{i=2}^k |a_1^{-1}a_i| \cdot |j(\tau_i)| +|a_1^{-1}b|.
\end{equation}
In fact, we obtain
\begin{equation*}
|j(\tau_1)| \geq e^{2\pi \mathrm{Im}(\tau_1)}-2079 = e^{\pi f|\Delta|^{1/2}} - 2079
\end{equation*}
as well as
\begin{equation*}
|j(\tau_i)| \leq e^{2\pi \mathrm{Im}(\tau_i)}+2079 \leq e^{\pi f|\Delta|^{1/2}/2} + 2079, \ i \in \{ 2, \dots, k\},
\end{equation*}
by using (\ref{equation::jestimate}). Combining these estimates with (\ref{equation::inequality1}) yields
\begin{align} \label{equation::inequalitystep2}
e^{\pi f|\Delta|^{1/2}}-2079 
&\leq (k-1) H_0^{2[N_0:\IQ]}(e^{\pi f|\Delta|^{1/2}/2} + 2079)+ H_0^{[N_0:\IQ]}\house{b}.
\end{align}
It is easy to check that is not possible if $e^{\pi f|\Delta|^{1/2}/2} \geq 70k H_0^{[N_0:\IQ]} \max \{ H_0^{[N_0:\IQ]}, \house{b}^{1/2} \}$. We obtain thus Lemma \ref{lemma::samefield} in our situation with
\begin{equation} \label{equation::discriminantbound0}
c_1(\underline{a},b) = 2 [N_0:\IQ] \log(H_0) + \log^+(\house{b}) + \log(70k).
\end{equation}
Inspecting the argument of Step 1, we see that the linear equation (\ref{equation::noninvariance3}) satisfies 
\begin{equation}
H(a_1^\prime, a_2^\prime,\dots,a_{k^{\prime}}^\prime) \leq 2H_0^2 \text{ and } k^\prime \leq 2k;
\end{equation}
this implies Lemma \ref{lemma::automorphism} with 
\begin{equation*}
c_2(\underline{a}) = 4 [N_0:\IQ]\log(H_0) + 2[N_0:\IQ]\log(2)+ \log(140k).
\end{equation*}

\textit{Step 3.} Let $(j(\tau_1),\dots,j(\tau_k))$ be a solution of (\ref{equation::samefield}) in distinct singular moduli such that there exists an imaginary quadratic field $K$ and $\tau_i \in K$ ($i \in \{ 1, \dots, k \}$). Write again $\Delta$ for the discriminant of $\mathcal{O}_K$. Renaming if necessary, we may assume that $f_1 \geq \cdots \geq f_k$. Let $l \in \{ 1,\dots, k\}$ be maximal such that $f_1=f_2=\cdots=f_l$ and write $f$ for this conductor. By applying a Galois automorphism to (\ref{equation::samefield}), we may furthermore suppose that $\mathrm{Im}(\tau_1)=f|\Delta|^{1/2}/2$ without loss of generality. Note that we have $\mathrm{Im}(\tau_i) \leq f|\Delta|^{1/2}/4$ for $i \in \{ 2,\dots,l \}$ but that this may not be true for $i=l+1$.

We first prove the lemma under the additional assumption that
\begin{equation} \label{equation::assumption}
\pi (f - f_{l+1}) |\Delta|^{1/2} \geq 2 [N_0:\IQ] \log(H_0) + \log(2k).
\end{equation}
As in Step 2, we want to deduce an estimate from (\ref{equation::inequality1}). For this, we use (\ref{equation::jestimate}) and obtain
\begin{equation*}
|j(\tau_1)| \geq e^{2\pi \mathrm{Im}(\tau_1)}-2079 = e^{\pi  f|\Delta|^{1/2}} - 2079
\end{equation*}
as well as
\begin{equation*}
|j(\tau_i)| \leq e^{2\pi \mathrm{Im}(\tau_i)}+2079 \leq e^{\pi f|\Delta|^{1/2}/2} + 2079, \ i \in \{ 2, \dots, l\},
\end{equation*}
and
\begin{equation*}
|j(\tau_i)| \leq e^{2\pi \mathrm{Im}(\tau_i)}+2079 \leq e^{\pi  f_{l+1}|\Delta|^{1/2}} + 2079, \ i \in \{ l+1, \dots, k\}.
\end{equation*}
Combining these estimates with (\ref{equation::inequality1}) yields
\begin{equation} \label{equation::boundstep3_1}
e^{\pi f|\Delta|^{1/2}}-2079 \leq (k-1) H_0^{2[N_0:\IQ]} \left( e^{\pi f|\Delta|^{1/2}} \max \{ e^{- \pi f|\Delta|^{1/2}/2},  e^{\pi (f_{l+1}-f)|\Delta|^{1/2}}\} + 2079 \right) + H_0^{[N_0:\IQ]}\house{b}.
\end{equation}
If $e^{-\pi f|\Delta|^{1/2}/2} \geq e^{\pi (f_{l+1}-f)|\Delta|^{1/2}}$, this inequality coincides with (\ref{equation::inequalitystep2}). In this case, there is thus nothing to prove as long as
\begin{equation} \label{equation::lowerboundonc1}
c_1(\underline{a},b) \geq 2 [N_0:\IQ] \log(H_0) + \log^+(\house{b}) + \log(70k).
\end{equation}
If $e^{-\pi f|\Delta|^{1/2}/2} < e^{\pi (f_{l+1}-f)|\Delta|^{1/2}}$, combining inequality (\ref{equation::boundstep3_1}) with our assumption (\ref{equation::assumption}) yields
\begin{equation*}
e^{\pi f|\Delta|^{1/2}} < 4158(k H_0^{2[N:\IQ]}+1) + 2 H_0^{[N_0:\IQ]} \house{b}< 8400 k H_0^{[N_0:\IQ]} \max \{ H_0^{[N_0:\IQ]} , \house{b} \}.
\end{equation*}
Comparing with (\ref{equation::lowerboundonc1}) above, we see that 
\begin{equation} \label{equation::discriminantbound1}
c_1(\underline{a},b) = 2 [N_0:\IQ] \log(H_0) + \log^+(\house{b})  + \log (8400k) 
\end{equation}
is an eligible choice if (\ref{equation::assumption}) is satisfied.

It remains to reduce the general case to the one already considered. Assume that
\begin{equation*}
\pi (f-f_{l+1}) |\Delta|^{1/2} < 2[N_0:\IQ] \log(H_0) + \log(2k).
\end{equation*}
In other words, $f_{l+1}$ is contained in the open interval 
\begin{equation*}
\left(f- |\Delta|^{-1/2} \pi^{-1}(2[N_0:\IQ] \log(H_0) + \log(2k)),f\right)
\end{equation*}
so that
\begin{equation*}
\gcd  \{ f, f_{l+1} \} < |\Delta|^{-1/2}\pi^{-1}(2[N_0:\IQ] \log(H_0) + \log(2k)).
\end{equation*}
Hence,
\begin{equation} \label{equation::gcdbound}
\lcm \{ f, f_{l+1} \} / f_{l+1} > f |\Delta|^{1/2}\pi\left(2[N_0:\IQ] \log(H_0) + \log(2k)\right)^{-1}.
\end{equation}

Our aim is to give a \textit{new} linear equation 
\begin{equation} \label{equation::newlinearequation}
a_1^{(1)} j(\tau_1^{(1)}) + \cdots + a_{k^{(1)}}^{(1)} j(\tau_{k^{(1)}}^{(1)}) + b^{(1)} = 0,
\end{equation}
in distinct singular moduli $j(\tau_1^{(1)}), \dots, j(\tau_{k^{(1)}}^{(1)})$ with
\begin{equation} \label{equation::discriminantcondition}
f^2 \Delta \in \{ \Delta(\tau_1^{(1)}), \dots, \Delta(\tau_{k^{(1)}}^{(1)}) \} \subseteq \{f_1^2 \Delta,\dots, f_k^2 \Delta \} \setminus \{ f_{l+1}^2 \Delta \}.
\end{equation}
(This means that we remove not only $j(\tau_{l+1})$ but all other singular moduli having discriminant $f_{l+1}^2 \Delta$ as well.) We obtain such an equation as the difference between (\ref{equation::samefield}) and one of its Galois conjugates. For this, we prove that that there exists an element 
\begin{equation*}
\sigma \in \Gal(N_0 \cdot K[f] \cdot K[f_{l+1}]/ N_0 \cdot K[f_{l+1}])=: G
\end{equation*}
such that
\begin{equation} \label{equation::noninvariance}
(a_1 j(\tau_1) + \cdots + a_l j(\tau_l))^\sigma \neq a_1 j(\tau_1) + \cdots + a_l j(\tau_l).
\end{equation}
Our new linear equation (\ref{equation::newlinearequation}) arises then from lifting $\sigma$ to some $\widetilde{\sigma} \in \Gal(\IQbar/N_0 \cdot K[f_{l+1}])$ and regrouping the terms in 
\begin{equation*}
(a_1 j(\tau_1)+a_2 j(\tau_2) + \cdots + a_k j(\tau_k) + b) - (a_1 j(\tau_1)+a_2 j(\tau_2) + \cdots + a_k j(\tau_k) + b)^{\widetilde{\sigma}} = 0.
\end{equation*}
By construction, the condition (\ref{equation::discriminantcondition}) is evidently verified.

On the one hand, restriction induces an injection $G \hookrightarrow \Gal(N_0 \cdot K[f]/N_0 \cdot K)$. By the part of Lemma \ref{lemma::automorphism} that is established in Step 1, there are at most $l \leq k$ elements in $\Gal(N_0 \cdot K[f]/ N_0 \cdot K)$ violating (\ref{equation::noninvariance}) unless
\begin{equation} \label{equation::discriminantbound2}
f|\Delta|^{1/2} < 4[N_0:\IQ]\log(H_0)+ 2[N_0:\IQ]\log(2)+ \log(140k),
\end{equation}
which we can exclude from the outset by choosing $c_1(\underline{a},b)$ sufficiently large. On the other hand, the diagram
\begin{equation*}
\xymatrix{
& N_0 \cdot K[\lcm \{ f, f_{l+1} \}] \ar@{-}[ld] \ar@{-}[rd] & \\
N_0 \cdot K[f_{l+1}] \ar@{-}[rd] & & K[\lcm \{ f, f_{l+1} \} ] \ar@{-}[ld] \\
& K[f_{l+1}]
}
\end{equation*}
of field extensions in combination with (\ref{equation::classnumberformula}) and (\ref{equation::rcf_union}) shows that
\begin{equation*}
\# G \geq \frac{\left[K[\lcm \{ f, f_{l+1} \}]:K[f_{l+1}]\right]}{3[N_0:\IQ]} \geq \frac{\sqrt{6}}{36[N_0:\IQ]} \cdot \left( \frac{\lcm \{ f, f_{l+1} \}}{f_{l+1}}\right)^{1/2}.
\end{equation*}
Hence, there is either an element $\sigma \in G$ satisfying (\ref{equation::noninvariance}) or
\begin{equation*}
\lcm\{f,f_{l+1}\}/f_{l+1}\leq 216 k^2 [N_0:\IQ]^2.
\end{equation*}
However, plugging this inequality together with (\ref{equation::gcdbound}) yields
\begin{equation}  \label{equation::discriminantbound3}
f|\Delta|^{1/2} < 138 k^2 [N_0:\IQ]^3 \log(H_0) + 69 k^2 \log(2k) [N_0:\IQ]^2;
\end{equation}
thus, we can exclude this case likewise by choosing $c_1(\underline{a},b)$ sufficiently large. In the sequel, we suppose this is the case so that we have an automorphism $\sigma \in G$ with (\ref{equation::noninvariance}) at our disposal.

As described above, this allows us to obtain a new linear equation (\ref{equation::newlinearequation}) satisfying (\ref{equation::discriminantcondition}). It is clear that (\ref{equation::newlinearequation}) is a (homogeneous) linear equation in $k^{(1)} \leq 2k$ distinct singular moduli and that
\begin{equation*}
H(a_1^{(1)},\dots,a_{k^{(1)}}^{(1)}) \leq 2 H_0^2
\end{equation*}
as well as $\house{b^{(1)}} \leq 2\house{b}$. In general, the new equation (\ref{equation::newlinearequation}) does not need to fall within the scope of linear equations we can already deal with (i.e., those satisfying (\ref{equation::assumption}) or those already considered in Step 2 above). In any case, we can repeat the above procedure until we end up with a linear equation satisfying its respective version of (\ref{equation::assumption}) or for which we can invoke Step 2. Indeed, by (\ref{equation::discriminantcondition}) the number of discriminants associated with a singular moduli appearing in the linear equation drops by one at each step. This can be repeated at most $k-1$ times until we obtain a linear equation of the shape treated in Step 2. At the $i$-th iteration, we obtain a \textit{non-trivial} linear equation 
\begin{equation*}
a_1^{(i)} j(\tau_1^{(i)}) + \cdots + a_{k^{(i)}}^{(i)} j(\tau_{k^{(i)}}^{(i)}) + b^{(i)} = 0,
\end{equation*}
whose number of coefficients $k^{(i)}$ and height $H^{(i)}_0$ are bounded by 
\begin{equation*}
k^{(i)}\leq 2 k^{(i-1)} \leq \cdots \leq 2^{i} k
\end{equation*}
and
\begin{equation*}
H^{(i)}_0 \leq 2 (H^{(i-1)}_0)^2 \leq \cdots \leq 2^{2^{i}-1}H^{2^{i}}_0.
\end{equation*}
Additionally, we have $\house{b^{(i)}} \leq 2^{i} \house{b}$. To obtain an explicit value for $c_1(\underline{a},b)$, it suffices to consider (\ref{equation::discriminantbound0}), (\ref{equation::discriminantbound1}), (\ref{equation::discriminantbound2}), and (\ref{equation::discriminantbound3}) with $(k,H_0,\house{b})$ replaced by $(2^{k-1}k,2^{2^{k-1}-1}H^{2^{k-1}}_0,2^{k-1}\house{b})$.\footnote{Before substituting, note that the left-hand sides in (\ref{equation::discriminantbound0}), (\ref{equation::discriminantbound1}), (\ref{equation::discriminantbound2}), and (\ref{equation::discriminantbound3}) are majorized by
\begin{equation*}
138 k^2 [N_0:\IQ]^3 \log(H_0) + \log^+(\house{b}) + 69 k^2 \log(2k) [N_0:\IQ]^2.
\end{equation*}
} A quick computation yields that
\begin{equation} \label{equation::discriminantboundstep4_2}
c_1(\underline{a},b) = 
18 k^2 8^{k} [N_0:\IQ]^3 \log(H_0) + \log^+(\house{b}) + 21 k^3 8^k [N_0:\IQ]^3.
\end{equation}
is an appropriate choice. By Step 1, Lemma \ref{lemma::automorphism} is true with
\begin{equation} \label{equation::discriminantboundstep4}
c_2(\underline{a}) = 144 k^264^k[N_0:\IQ]^3 \log(H_0) + 218 k^3 64^{k} [N_0:\IQ]^3
\end{equation}
if all singular moduli are associated with the same imaginary quadratic field $K$.

\textit{Step 4.} Finally, we consider a general solution of (\ref{equation::samefield}) in distinct singular moduli $j(\tau_1),\dots,j(\tau_k)$. Without loss of generality, we may assume that there are integers $k_1,\dots,k_{r-1}$ satisfying $0=k_0 < k_1 < k_2 < \cdots < k_r = k$ and imaginary quadratic fields $K_1,\dots,K_r$ such that
\begin{align*}
\IQ(\tau_1)=\IQ(\tau_2)= &\cdots = \IQ(\tau_{k_1}) = K_1, \\
\IQ(\tau_{k_1+1})=\IQ(\tau_{k_1+2})= &\cdots = \IQ(\tau_{k_2}) = K_2, \\
&\dots \\
\IQ(\tau_{k_{r-1}+1})=\IQ(\tau_{k_{r-1}+2}) = &\cdots = \IQ(\tau_{k}) = K_r.
\end{align*}
To simplify notation, we write respectively $a_i^{(j)}$, $\tau_i^{(j)}$, $f_i^{(j)}$, and $l^{(j)}$ for $a_{k_{j-1}+i}$, $\tau_{k_{j-1}+i}$, $f_{k_{j-1}+i}$, and $k_j-k_{j-1}$.\footnote{The reader is warned not to confuse the index $j$ with the $j$-invariant. The latter is only used as part of the expression $j(\cdot)$ so that there should be no confusion.} We define
\begin{equation*}
L_j=K_j(a^{(j)}_{1}j(\tau^{(j)}_{1}) + \cdots + a^{(j)}_{l^{(j)}}j(\tau^{(j)}_{l^{(j)}})),
\end{equation*}
and
\begin{equation*}
M_j=K_j(j(\tau^{(j)}_{1}),j(\tau^{(j)}_{2}),\dots,j(\tau^{(j)}_{l^{(j)}}))
\end{equation*}
for all $j \in \{ 1, \dots, r \}$. The extension $K_j(j(\tau_i^{(j)}))/\IQ$ is Galois (see Section \ref{subsection::ringclassfields}) and so is $M_j/\IQ$. We denote by $\overline{L}_j$ the normal closure of $L_j$ over $K_j$. If $r=1$, there is nothing to prove; for we only have to ensure that
\begin{equation*}
c_1(a,\underline{b}) \geq
18 k^2 8^{k} [N_0:\IQ]^3 \log(H_0) + \log^+(\house{b}) + 21 k^3 8^k [N_0:\IQ]^3.
\end{equation*}
because of Step 3 above.

Recall the (possibly non-existent) exceptional field $K_\ast$ from Section \ref{subsection::classnumbers}.
In the remaining case where $r>1$, our first goal is to bound $|\Delta(\tau_i^{(j)})|$ whenever $K_j \neq K_\ast$. For this, let $c_3=c_3(k,H_0,[N_0:\IQ])$ denote the right-hand side of (\ref{equation::discriminantboundstep4}), and assume that we have both $|\Delta(\tau_i^{(j)})|^{1/2} \geq c_3$ and $K_j \neq K_\ast$ simultaneously for some $j \in \{ 1,\dots, r \}$ and $i \in \{ 1, \dots, l^{(j)}\}$. By virtue of the special case of Lemma \ref{lemma::automorphism} proven in Step 3, we know that
\begin{equation*}
[\overline{L}_j(j(\tau_i^{(j)})):\overline{L}_j] \leq [L_j(j(\tau_i^{(j)})):L_j] \leq l^{(j)} \leq k.
\end{equation*}
Write $M_j^\prime=\prod_{\substack{ 1\leq j^\prime \leq r \\ j^\prime \neq j }}M_{j^\prime}$ and consider the following diagram of field extensions:
\begin{equation*} 
\xymatrix{
& \overline{L}_j(j(\tau_i^{(j)})) \cdot \left( NM_j \cap N K_j M_j^\prime \right)=:F_0 \ar@{-}[dl] \ar@{-}[dr] & \\
\overline{L}_j(j(\tau_i^{(j)})) \ar@{-}[d] \ar@{-}[dr] & & NM_j \cap N K_j M_j^\prime=:F_1 \ar@{-}[dl] \ar@{-}[d]\\
K_j(j(\tau_i^{(j)})) \ar@{-}[dr] & \overline{L}_j \ar@{-}[d] & M_j \cap K_j M_j^\prime=:F_2 \ar@{-}[dl] \\
& K_j & & 
}
\end{equation*}
(Here, $\overline{L}_j$ is in $F_1$ because $L_j \subseteq F_1$ and $F_1/K_j$ is Galois.)
We claim that the abelian group $\Gal(K_j(j(\tau_i^{(j)}))/K_j)$ is an extension of a finite group of exponent $2^{r+1}$ by a group of order $\leq k [N:\IQ]^2$. First of all, note that $F_0/K_j$ is a Galois extension, being the composite of several extensions of $K_j$ that are evidently Galois. Next, we note that $\Gal(F_0/F_2)$ is a normal subgroup of $\Gal(F_0/K_j)$ with at most $k [N:\IQ]^2$ elements. In fact,
\begin{equation*}
[F_0:F_1] \leq [\overline{L}_j(j(\tau_i^{(j)})):\overline{L}_j] \leq k
\end{equation*}
and Lemma \ref{lemma::galoistheory} below implies that
\begin{equation*}
[F_1:F_2] \leq [N:\IQ]^2.
\end{equation*}
In addition, we derive from Lemma \ref{lemma::ringclassfieldintersections} that $\Gal(F_2/K_j)$ is annihilated by $2^{r+1}$. As the group $\Gal(K_j(j(\tau_i^{(j)}))/K_j)$ is a quotient of $\Gal(F_0/K_j)$, the claim follows directly from Lemma \ref{lemma::grouptheory} below.

From Section \ref{subsection::ringclassfields}, we know that $\Gal(K_j(j(\tau_i^{(j)}))/K_j) = \Pic(\mathcal{O}(\tau_i^{(j)}))$ so that the above yields
\begin{equation*}
\# \Pic(\mathcal{O}(\tau_i^{(j)})) \leq k [N:\IQ]^2 \dim_{\IF_2}(\Pic(\mathcal{O}(\tau_i^{(j)}))[2])^{r+1}.
\footnote{We use here tacitly that $\# G[2] \geq \# H[2]$ for any surjective homomorphism $G \twoheadrightarrow H$ of finite abelian groups. Since $\# G[2] = \# (G \otimes \IF_2)$ this follows directly from the right-exactness of $(\cdot) \otimes \IF_2$.}
\end{equation*}
Using (\ref{equation::pic2}) with $n=6(r+1)$ and $r \leq k$, we deduce from this
\begin{equation*}
\# \Pic(\mathcal{O}(\tau_i^{(j)})) \leq 144^{k+1} (k+1)^{2k+3} [N:\IQ]^2 |\Delta(\tau_i^{(j)})|^{1/6}.
\end{equation*}
In combination with (\ref{equation::classnumberbound}), we obtain
\begin{equation*}
(7.4 \cdot 10^{-4}) \cdot |\Delta(\tau_i^{(j)})|^{5/12} \leq \# \Pic(\mathcal{O}(\tau_i^{(j)})) \leq 144^{k+1} (k+1)^{2k+3} [N:\IQ]^2 |\Delta(\tau_i^{(j)})|^{1/6}.
\end{equation*}
and hence
\begin{equation*}
|\Delta(\tau_i^{(j)})|^{1/2} < (3.8 \cdot 10^{10}) (2.1 \cdot 10^4)^{k} (k+1)^{4k+6} [N:\IQ]^4.
\end{equation*}

It remains to bound $|\Delta(\tau_i^{(j)})|^{1/2}$ in case $K_j = K_\ast$. To ease notation, let us assume that $K_1 = K_\ast$. (If no $K_i$ equals the exceptional field $K_\ast$, we are already done at this point.) We rewrite the original linear equation (\ref{equation::samefield}) as
\begin{equation} \label{equation::rewrittenequation}
a_1 j(\tau_1)+a_2 j(\tau_2) + \cdots + a_{k_1} j(\tau_{k_1}) + b^\prime = 0,
\end{equation}
with
\begin{equation*}
b^\prime = b + a_{k_1+1} j(\tau_{k_1+1}) + \cdots + a_{k} j(\tau_{k}).
\end{equation*}
In words, we put all the singular moduli with CM-field other than $K_\ast$ into the constant term. From above, we know that
\begin{equation*}
|\Delta(\tau_i)|^{1/2} < 144 k^264^k[N:\IQ]^3 \log(H_0) + (3.8 \cdot 10^{10}) (2.1 \cdot 10^4)^{k} (k+1)^{4k+6} [N:\IQ]^4
\end{equation*}
for each $i \in [k_1+1,k] \cap \IZ$. With the estimate (\ref{equation::jhouse}), we have
\begin{equation*}
\house{b^\prime} < \house{b} + 11 k H^{[N:\IQ]} \max_{k_1+1 \leq i \leq k}\{\exp(\pi|\Delta(\tau_i)|^{1/2})\},
\end{equation*}
and hence
\begin{multline} \label{equation::housebound}
\log^+(\house{b^\prime}) < 460 k^264^k[N:\IQ]^3 \log(H_0) \\ + \log^+(\house{b}) + (1.2 \cdot 10^{11}) (2.1 \cdot 10^4)^{k} (k+1)^{4k+6} [N:\IQ]^4
\end{multline}
Since all singular moduli in the modified equation (\ref{equation::rewrittenequation}) are associated with the same CM-field, namely $K_\ast$, our Step 3 yields a bound on the remaining $|\Delta_i|^{1/2}$, $i \in \{1,\dots, k_1 \}$. In fact, we only have to plug (\ref{equation::housebound}) into (\ref{equation::discriminantboundstep4_2}). It is then easy to see that
\begin{equation} \label{equation::finalresult}
c_1(\underline{a},b) < 480 k^264^k[N:\IQ]^3 \log(H) + (1.3 \cdot 10^{11}) (2.1 \cdot 10^4)^{k} (k+1)^{4k+6} [N:\IQ]^4
\end{equation}
is an admissible choice.
\end{proof}

We use elementary Galois theory to produce a ready-to-use lemma for the above proof.

\begin{lemma} \label{lemma::galoistheory} Let $F_1$, $F_2$, $N$ be finite Galois extensions of a common base field $k$. Then,
\begin{equation*}
[NF_1 \cap NF_2 : N(F_1 \cap F_2)] \leq \min \{ [NF_1:F_1], [NF_2:F_2] \} \leq [N : k].
\end{equation*}
\end{lemma}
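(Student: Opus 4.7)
My plan is to translate the statement into group theory via the Galois correspondence. Let $L = NF_1F_2$, which is Galois over $k$ because $N$, $F_1$, $F_2$ all are, and set $G = \Gal(L/k)$, $H_N = \Gal(L/N)$, $H_1 = \Gal(L/F_1)$, $H_2 = \Gal(L/F_2)$, all normal in $G$. Since compositum of fields corresponds to intersection of fixing subgroups and vice versa, the inequality in question reduces to
\[
[A:B] \leq [H_1 H_N : H_N] = [H_1 : H_1 \cap H_N],
\]
where $A := H_N \cap H_1 H_2$ and $B := (H_N \cap H_1)(H_N \cap H_2)$, together with the analogous bound involving $H_2$ and the trivial inequality $[H_1 H_N : H_N] \leq [G : H_N] = [N : k]$. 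Note that $B \subseteq A$ and that $B$ is indeed a subgroup, as $H_N \cap H_1$ and $H_N \cap H_2$ are each normal in $G$.

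To prove the bound, I would first restrict the natural quotient map $\pi : H_1 H_2 \to H_1 H_2/H_1 \cong H_2/(H_1 \cap H_2)$ (identifying $h_1 h_2 H_1$ with $h_2(H_1 \cap H_2)$) to $A$. Its kernel is $A \cap H_1 = H_N \cap H_1$. Writing an arbitrary $x \in A$ as $x = h_1 h_2$ with $h_1 \in H_1$, $h_2 \in H_2$, the identity $h_2 = h_1^{-1}x \in H_1 H_N$ shows that $\pi|_A$ takes values in $(H_1 H_N \cap H_2)/(H_1 \cap H_2)$, and conversely any $y \in H_1 H_N \cap H_2$ is hit (decompose $y = h_1 n$ with $h_1 \in H_1$, $n \in H_N$; then $n \in H_1 H_2 \cap H_N = A$). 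An analogous calculation shows $\pi(B) = (H_N \cap H_2)(H_1 \cap H_2)/(H_1 \cap H_2)$. Quotienting out by $B/(H_N \cap H_1)$ thus yields an injection
\[
A/B \hookrightarrow (H_1 H_N \cap H_2)\big/(H_N \cap H_2)(H_1 \cap H_2).
\]

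To finish, I would compose with the quotient $H_1 H_N \cap H_2 \to H_1 H_N/H_N$, $y \mapsto yH_N$, whose kernel is $H_2 \cap H_N$ because $H_N \subseteq H_1 H_N$. Since $(H_N \cap H_2)(H_1 \cap H_2) \supseteq H_N \cap H_2$, the right-hand side of the above injection is a further quotient of $(H_1 H_N \cap H_2)/(H_N \cap H_2)$, which embeds into $H_1 H_N/H_N$. Consequently $[A : B] \leq [H_1 H_N : H_N] = [NF_1 : F_1]$; interchanging the roles of $F_1$ and $F_2$ gives the second term of the $\min$, and $[NF_1 : F_1] = [N : N \cap F_1] \leq [N : k]$ is immediate. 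The most delicate step is the Dedekind-type bookkeeping that pins down the image of $\pi|_A$ as precisely $(H_1 H_N \cap H_2)/(H_1 \cap H_2)$; once this identification is established, the rest is a routine chain of subgroup-to-quotient reductions.
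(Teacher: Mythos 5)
Your proof is correct, but it runs along a different track than the paper's. You pass to the single Galois group $G=\Gal(NF_1F_2/k)$ and prove the purely group-theoretic inequality $[H_N\cap H_1H_2:(H_N\cap H_1)(H_N\cap H_2)]\leq \min\{[H_1:H_1\cap H_N],[H_2:H_2\cap H_N]\}$ for normal subgroups, by tracking the image of $H_N\cap H_1H_2$ under $H_1H_2\to H_1H_2/H_1\cong H_2/(H_1\cap H_2)$ and then mapping into $H_1H_N/H_N$; your identification of the image of $\pi|_A$ and of $\pi(B)$ checks out, the kernels are as you say, and the final chain of quotients/injections gives the bound, with symmetry supplying the other term of the minimum. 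The paper instead stays entirely at the level of field degrees: it applies the natural-irrationality theorem (\cite[Theorem VI.1.12]{Lang2002}) to get $[NF_1\cap NF_2:F_1\cap NF_2]\leq[NF_1:F_1]$ and $[F_1\cap NF_2:F_1\cap F_2]\leq[NF_2:F_2]$, multiplies these, and divides by $[N(F_1\cap F_2):F_1\cap F_2]\geq\max\{[NF_1:F_1],[NF_2:F_2]\}$, so the $\min$ appears as a product-over-max rather than from a direct injection. Your version is more hands-on and isolates a reusable statement about three normal subgroups of a finite group (so in particular it never needs the tower/quotient trick), at the cost of more bookkeeping with the isomorphism theorems; the paper's version is shorter given the cited results from Lang and avoids explicit coset manipulation. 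Both correctly use all the Galois hypotheses, and your closing observations $[H_1H_N:H_N]\leq[G:H_N]=[N:k]$ (equivalently $[NF_1:F_1]=[N:N\cap F_1]\leq[N:k]$) match the paper's appeal to \cite[Corollary VI.1.13]{Lang2002}.
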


\begin{proof} This follows from a repeated use of \cite[Theorem VI.1.12]{Lang2002}. In fact, it yields
\begin{equation*}
[N F_1 \cap N F_2 : F_1 \cap N F_2] = [(N F_1 \cap N F_2) \cdot F_1 : F_1] \leq [N F_1 : F_1]
\end{equation*}
and
\begin{equation*}
[F_1 \cap N F_2 : F_1 \cap F_2] =  [(F_1 \cap N F_2) \cdot F_2 : F_2] \leq [N F_2 : F_2].
\end{equation*}
In addition, \cite[Corollary VI.1.13]{Lang2002} gives $[N F_i: F_i] \leq [N : k]$ ($i \in \{1,2\}$). It also implies
\begin{equation*}
\max \{ [NF_1:F_1], [NF_2:F_2] \} \leq [N(F_1 \cap F_2) : (F_1 \cap F_2)].
\end{equation*}
Combining all these inequalities, we obtain
\begin{align*}
[(NF_1 \cap NF_2) : N(F_1 \cap F_2)] 
&= \frac{[(NF_1 \cap NF_2) : (F_1 \cap F_2)]}{[N(F_1 \cap F_2): (F_1 \cap F_2)]} \\
&\leq \frac{[NF_1 : F_1][NF_2 : F_2]}{\max \{ [NF_1:F_1], [NF_2:F_2] \}}\\
&\leq \min \{ [NF_1:F_1], [NF_2:F_2] \}.
\end{align*}
\end{proof}

We record a very simple group-theoretic lemma.

\begin{lemma} \label{lemma::grouptheory} Let $G$ be a finite group with a normal subgroup $N$ such that $G/N$ is annihilated by $n$. Any quotient $G^\prime$ of $G$ has likewise a normal subgroup $N^\prime$ of size $\# N^\prime \leq \# N$ such that $G^\prime/N^\prime$ is annihilated by $n$. 
\end{lemma}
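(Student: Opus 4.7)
The statement is a routine piece of group theory, so the plan is to give the obvious construction and verify it. Let $\pi: G \twoheadrightarrow G'$ be the given quotient map, and define $N' := \pi(N)$. Since $N$ is normal in $G$ and $\pi$ is surjective, $N'$ is normal in $G'$ (the image of a normal subgroup under a surjective homomorphism is normal). The size bound $\#N' \leq \#N$ is immediate from the fact that $\pi$ restricted to $N$ surjects onto $N'$.

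The remaining task is to check that $G'/N'$ is annihilated by $n$. For this, I would observe that $\pi$ induces a surjective homomorphism
\begin{equation*}
\bar{\pi}: G/N \twoheadrightarrow G'/N' = \pi(G)/\pi(N),
\end{equation*}
well-defined because $\pi(N) = N'$. Any element of $G'/N'$ is therefore the image of some element of $G/N$, and since the latter is annihilated by $n$ by hypothesis, so is the former. This completes the argument.

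The only conceivable subtlety is the convention behind the phrase \emph{annihilated by $n$}: in the context where this lemma is applied (namely to $\Gal(F_2/K_j)$, an abelian Galois group arising from ring class field intersections via Lemma \ref{lemma::ringclassfieldintersections}), it means the group is abelian and every element has order dividing $n$. This property descends along surjections trivially, so there is no real obstacle. The entire proof is three lines once $N' = \pi(N)$ is written down, and I do not foresee any difficulty beyond bookkeeping.
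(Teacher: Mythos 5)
Your proposal is correct and coincides with the paper's own argument: the paper likewise sets $N' = \pi(N)$, notes normality from surjectivity and the trivial size bound, and shows $G'/\pi(N)$ is annihilated by $n$ because the composite $G \to G' \to G'/\pi(N)$ kills $N$ and hence factors through $G/N$ — the same induced surjection you write down.
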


\begin{proof} Let $\pi: G \twoheadrightarrow G^\prime$ be the quotient homomorphism. The image $N^\prime = \pi(N)$ is again a normal subgroup as $\pi$ is surjective. Furthermore, the composite $G \rightarrow G^\prime \rightarrow G^\prime/\pi(N)$ is a surjection, whose kernel includes $N$. Hence, it factors through $G/N$ and $G^\prime/\pi(N)$ is annihilated by $n$.
\end{proof}

\section{Proof of Theorem \ref{theorem::main}}
\label{section::andreoort}

Let $P=(j(\tau_1),\dots,j(\tau_n)) \in (L \setminus Z^{\mathrm{sp}})(\IQbar)$ be a special point. Without loss of generality, we may assume that there are integers $n_1,\dots,n_{r-1}$ satisfying $0=n_0 < n_1 < n_2 < \cdots < n_r = n$ such that
\begin{align*}
j(\tau_1)=j(\tau_2)= &\cdots = j(\tau_{n_1}), \\
j(\tau_{n_1+1})=j(\tau_{n_1+2})= &\cdots = j(\tau_{n_2}),  \\
&\cdots \\
j(\tau_{n_{r-1}+1})=j(\tau_{n_{r-1}+2}) = &\cdots = j(\tau_{n}),
\end{align*}
and $j(\tau_{n_i})\neq j(\tau_{n_j})$ for all $i,j \in \{ 1, \dots, r\}$ such that $i \neq j$. For each $i \in \{ 1, \dots, r\}$, we write
\begin{equation*}
Z_i = \mathrm{V}(z_{n_{i-1}+1}=\cdots=z_{n_i}).
\end{equation*}
(This means $Z_i = Y(1)^n$ if $n_{i}=n_{i-1}+1$.)
Consider the special subvariety
\begin{equation*}
Z = Z_1 \cap Z_2 \cap \cdots \cap Z_r \subseteq Y(1)^n,
\end{equation*}
and form the intersection $L_1 = L \cap Z$. To ease notation, we write $n^\prime = n+1$ and $r^\prime = r + 1$ in the sequel. Recall from Section \ref{subsection::heights} that we can associate with each (affine) linear subvariety $V \subseteq \IA_{N}^k$ a linear \textit{subspace} $V^\prime = \pi^{-1}(V^h)\subseteq \IA_{N}^{k+1}$. Since $L^\prime_1 = L^\prime \cap Z^\prime$ and $Z^\prime = Z_1^\prime \cap Z_2^\prime \cap \cdots \cap Z_r^\prime$ in $\IA_N^{n^\prime}$, we have 
\begin{equation} \label{equation::linearheightbound2}
H(L_1) = H(L_1^\prime) \leq H(L^\prime) H(Z^\prime) \leq H(L^\prime) \prod_{i=1}^{r} H(Z_i^\prime) = H(L) \prod_{i=1}^rH(Z_i)
\end{equation}
by \cite[Theorem 2.8.13]{Bombieri2006}. The annihilator $Z_i^\perp \subseteq (N^\vee)^n$ of $Z_i$ has basis
\begin{equation*}
z_{n_{i-1}+1}-z_{n_{i-1}+2}, \dots, z_{n_{i-1}+1}-z_{n_i}. 
\end{equation*}
With respect to the standard basis on $(N^\vee)^n = N^n$, we can express this basis in terms of a sparse $(n \times (n_i-n_{i-1}-1))$-matrix
\begin{equation*}
\begin{pmatrix}
0 & 0 & \cdots & 0 \\
\cdots & \cdots & \cdots & \cdots \\
0 & 0 & \cdots & 0 \\
1  &  1 & \cdots & 1 \\
-1 &  0 & \cdots & 0 \\
0  &  -1 & \cdots & 0 \\
\cdots & \cdots & \cdots & \cdots \\
0 & 0 & \cdots & -1 \\
0 & 0 & \cdots & 0 \\
\cdots & \cdots & \cdots & \cdots \\
0 & 0 & \cdots & 0
\end{pmatrix}.
\end{equation*}
Using the explicit formula given in (\ref{equation::heightformula}) and \cite[Proposition 2.8.10]{Bombieri2006}, we obtain the bound 
\begin{equation} \label{equation::linearheightbound1}
H(Z_i)=H(Z_i^\perp)\leq 2^{n_i-n_{i-1}-1}(n_i-n_{i-1})^{1/2};
\end{equation}
it suffices to note that the above matrix has at most $n_i-n_{i-1}$ minors of order $(n_i - n_{i-1}-1)$ with non-vanishing discriminant and that the discriminant of each of these minors is an integer of size at most $2^{n_i-n_{i-1}-1}$. We arrive hence at
\begin{equation*}
\prod_{i=1}^r H(Z_i) \leq 2^n\prod_{i=1}^r(n_i-n_{i-1})^{1/2}\leq 2^n \left(\frac{n}{r}\right)^{r/2} \leq 2^n \exp(e^{-1} n/2) < 3^n.
\end{equation*}
With (\ref{equation::linearheightbound2}), this gives $H(L_1) < 3^n H(L)$. If $L_1 = Z$, then $P$ would be contained in $L^{\mathrm{sp}}(\IQbar)$. We exclude this case in the following. Consider the projection
\begin{equation*}
\varpi|_Z: Z \subseteq Y(1)^n \longrightarrow Y(1)^r, \ (z_1, z_2, \dots, z_n) \longmapsto (z_{n_1},z_{n_2}, \dots, z_{n_r}).
\end{equation*}
Since the restriction $\varpi|_Z$ is an isomorphism, the image $L_2=\varpi(L_1)$ is a proper linear subspace of $\varpi(Z) = Y(1)^r$. If furthermore
\begin{equation*}
A=
\begin{pmatrix}
a_{11} & a_{12} & \cdots & a_{1l} \\
a_{21} & a_{22} & \cdots & a_{2l} \\
\cdots & \cdots & \cdots & \cdots \\
a_{r^\prime 1} & a_{r^\prime 2} & \cdots & a_{r^\prime l}
\end{pmatrix}
\end{equation*}
is a $(r^\prime \times l)$-matrix whose columns form a basis of $L_2^\prime \subset \IQbar^{r^\prime}$, the $r^\prime$-th coordinate being associated with the (homogenized) degree zero part, then the columns of the $(n^\prime \times l)$-matrix
\begin{equation*}
B=\begin{pmatrix}
\left.
\begin{matrix}
1 \\
\vdots \\
1
\end{matrix}
\right\}
(n_1-n_0) & & \\
& \left.
\begin{matrix}
1 \\
\vdots \\
1
\end{matrix}
\right\}
(n_2-n_1)
& \\
& & \ddots \\
& & & \left.
\begin{matrix}
1 \\
\vdots \\
1
\end{matrix}
\right\}
(n_r-n_{r-1}) \\
& & & & 1
\end{pmatrix}
\begin{pmatrix}
a_{11} & a_{12} & \cdots & a_{1l} \\
a_{21} & a_{22} & \cdots & a_{2l} \\
\cdots & \cdots & \cdots & \cdots \\
a_{r^\prime 1} & a_{r^\prime 2} & \cdots & a_{r^\prime l}
\end{pmatrix}
\end{equation*}
form a basis of $L_1^\prime \subset \IQbar^{n^\prime}$. For each subset $I \subset \{ 1, \dots, r^\prime \}$ of cardinality $l$, we denote by $A_I$ the minor
\begin{equation*}
\begin{pmatrix}
a_{i_11} & a_{i_12} & \cdots & a_{i_1l} \\
a_{i_21} & a_{i_22} & \cdots & a_{i_2l} \\
\cdots & \cdots & \cdots & \cdots \\
a_{i_l1} & a_{i_l2} & \cdots & a_{i_ll}
\end{pmatrix}
\end{equation*}
of order $l$. Using again (\ref{equation::heightformula}), we have
\begin{equation*}
H(L_1)= 
\prod_{\nu \in \Sigma_f(N)} \left( \max_I \left\lbrace \left\vert\det(A_I)\right\vert_\nu\right\rbrace \right)
\prod_{\nu \in \Sigma_\infty(N)} \left( \sum_I \left(\prod_{j=1}^{l}(n_{i_j}-n_{i_{j-1}}) \right)\left\vert\det(A_I)\right\vert_\nu^2
\right)^{1/2}
\end{equation*}
where $I = \{ i_1, i_2, \dots, i_l\}$ runs over all subsets $I \subset \{ 1,\dots, r \}$ of cardinality $l$. Similarly, we have
\begin{equation*}
H(L_2)= 
\prod_{\nu \in \Sigma_f(N)} \left( \max_I \left\lbrace \left\vert\det(A_I)\right\vert_\nu\right\rbrace \right)
\prod_{\nu \in \Sigma_\infty(N)} \left( \sum_I\left\vert\det(A_I)\right\vert_\nu^2
\right)^{1/2}.
\end{equation*}
It is hence evident that $H(L_2) \leq H(L_1)$. In Section \ref{subsection::heights}, it is noted that this means that the point $\pi(P) = (j(\tau_{n_1}),j(\tau_{n_2}),\dots,j(\tau_{n_r})) \in L_2(\IQbar)$ has to satisfy a non-trivial linear equation
\begin{equation*}
a_1 j(\tau_{n_1}) + a_2 j(\tau_{n_2}) + \cdots + a_{n_r} j(\tau_{n_r}) + b = 0
\end{equation*}
with $a_1,a_2,\cdots,a_{n_r},b \in N$ and
\begin{equation*}
H(a_1,a_2,\cdots,a_{n_r},b) \leq H(L_2) \leq H(L_1) <3^n H(L).
\end{equation*}
Our construction is such that the singular moduli $j(\tau_{n_i})$, $1\leq i \leq r$, are pairwise distinct so that we can apply Lemma \ref{lemma::samefield}. With the constant $c_1(\underline{a},b)$ as given in (\ref{equation::finalresult}), we conclude that
\begin{align*}
|\Delta(\tau_i)|^{1/2}  < 480 k^264^k[N:\IQ]^3 \log(H) + (1.4 \cdot 10^{11}) (2.1 \cdot 10^4)^{k} (k+1)^{4k+6} [N:\IQ]^4.
\end{align*}
This completes the proof of our main theorem.

\bibliographystyle{plain}
\bibliography{references}

\end{document}